\begin{document}

\numberwithin{equation}{section}
\newtheorem{thm}[equation]{Theorem}
\newtheorem{pro}[equation]{Proposition}
\newtheorem{prob}[equation]{Problem}
\newtheorem{qu}[equation]{Question}
\newtheorem{cor}[equation]{Corollary}
\newtheorem{con}[equation]{Conjecture}
\newtheorem{lem}[equation]{Lemma}
\theoremstyle{definition}
\newtheorem{ex}[equation]{Example}
\newtheorem{defn}[equation]{Definition}
\newtheorem{ob}[equation]{Observation}
\newtheorem{rem}[equation]{Remark}

\hyphenation{homeo-morphism} 

\newcommand{\calA}{\mathcal{A}} 
\newcommand{\calD}{\mathcal{D}} 
\newcommand{\calE}{\mathcal{E}}
\newcommand{\calC}{\mathcal{C}} 
\newcommand{\Set}{\mathcal{S}et\,} 
\newcommand{\Top}{\mathcal{T}\!op \,}
\newcommand{\Topst}{\mathcal{T}\!op\, ^*}
\newcommand{\calK}{\mathcal{K}} 
\newcommand{\calO}{\mathcal{O}} 
\newcommand{\calS}{\mathcal{S}} 
\newcommand{\calT}{\mathcal{T}} 
\newcommand{\Z}{{\mathbb Z}}
\newcommand{\C}{{\mathbb C}}
\newcommand{\Q}{{\mathbb Q}}
\newcommand{\R}{{\mathbb R}}
\newcommand{\N}{{\mathbb N}}
\newcommand{\F}{{\mathcal F}} 

\def\op{\operatorname}

\hfill

\title{Poincar\'e polynomials of a map and \\
a relative Hilali conjecture}  

\author{ Toshihiro Yamaguchi \ \ and \ \ Shoji Yokura}
\thanks{2010 MSC: 06A06,18B35, 18B99, 54B99, 55P62, 55P99, 55N99.\\
Keywords: Hiali conjecture, Poincar\'e polynomial, rational homotopy theory.}
\date{}

\address{Faculty of Education, Kochi University, 2-5-1,Kochi, 780-8520, Japan} 
\email{tyamag@kochi-u.ac.jp}

\address{Department of Mathematics  and Computer Science, Graduate School of Science and Engineering, Kagoshima University, 1-21-35 Korimoto, Kagoshima, 890-0065, Japan
}

\email{yokura@sci.kagoshima-u.ac.jp}

\maketitle 

\begin{abstract}  In this paper we introduce homological and homotopical Poincar\'e polynomials $P_f(t)$ and $P^{\pi}_f(t)$ of a continuous map $f:X \to Y$ such that if $f:X \to Y$ is a constant map, or more generally, if $Y$ is contractible, then these Poincar\'e polynomials are respectively equal to the usual homological and homotopical Poincar\'e polynomials $P_X(t)$ and $P^{\pi}_X(t)$ of the source space $X$. Our relative Hilali conjecture $P^{\pi}_f(1) \leqq P_f(1)$ is a map version of the  the well-known Hilali conjecture $P^{\pi}_X(1) \leqq P_X(1)$ of a rationally elliptic space X. In this paper we show that under the condition 
that $H_i(f;\mathbb Q):H_i(X;\mathbb Q) \to H_i(Y;\mathbb Q)$ is not injective for some $i>0$, the relative Hilali conjecture of product of maps holds, namely, there exists a positive integer $n_0$ such that for $\forall n \geqq n_0$ the \emph{strict inequality $P^{\pi}_{f^n}(1) < P_{f^n}(1)$} holds, where $f^n:X^n \to Y^n$. In the final section we pose a question whether a ``Hilali"-type inequality $HP^{\pi}_X(r_X) \leqq P_X(r_X)$ holds for a rationally hyperbolic space $X$, provided the the homotopical Hilbert--Poincare series $HP^{\pi}_X(r_X)$ converges at the radius $r_X$ of convergence.
\end{abstract}

\section{Introduction}

The most important and fundamental topological invariant in geometry and topology is the Euler--Poincar\'e characteristic $\chi(X)$,
which is 
the alternating sum of the Betti numbers $\dim H_i(X;\Q)$:
$$\chi(X):= \sum_{i \geqq 0} (-1)^i\dim H_i(X;\Q) ,$$
provided that each $\dim H_i(X;\Q)$ and $\chi(X)$ are both finite.
Similarly, for a topological space whose fundamental group is an Abelian group one can define the \emph{homotopical Betti number} $\dim (\pi_i(X)\otimes \Q)$ where $i\geqq 1$ and the \emph{homotopical Euler--Poincar\'e characteristic}:
$$\chi^{\pi}(X):= \sum_{i \geqq 1}  (-1)^i\dim (\pi_i(X)\otimes \Q),$$
provided that each $\dim (\pi_i(X)\otimes \Q)$ and $\chi^{\pi}(X)$ are both finite.
The Euler--Poincar\'e characteristic is the special value of the Poincar\'e polynomial $P_X(t)$ at $t=-1$ and the homotopical Euler--Poincar\'e characteristic is the special value of the homotopical Poincar\'e polynomial $ P^{\pi}_X(t)$ at $t=-1$:
$$P_X(t):= \sum_{i \geqq 0}  t^i \dim H_i(X;\Q), \quad \chi(X) = P_X(-1),$$
$$ P^{\pi}_X(t):= \sum_{i \geqq 1} t^i \dim (\pi_i(X)\otimes \Q), \quad \chi^{\pi}(X) = P^{\pi}_X(-1).$$
Since we consider polynomials, besides the requirement that $\dim H_i(X;\Q) $ and $\dim \left (\pi_i(X) \otimes \Q \right) $ are each finite, we assume that there exist integers $n_0$ and $m_0$ such that $H_i(X;\Q) =0$ for $\forall i >n_0$ and $\pi_j(X)\otimes \Q=0$ for $\forall j >m_0$, which are equivalent to requiring that
$$\dim H_*(X;\Q ) := \sum_{i \geqq 0} \dim H_i(X;\Q) < \infty, \quad 
\dim (\pi_*(X)\otimes \Q ) : = \sum_{i \geqq 1} \dim (\pi_i(X) \otimes \Q) < \infty.$$
Such a space $X$ is called \emph{rationally elliptic}. If we have
$$\dim H_*(X;\Q ) < \infty,  \quad 
\dim (\pi_*(X)\otimes \Q ) =\infty,$$ then such a space $X$ is called \emph{rationally hyperbolic}, because it follows (see \cite[Theorem 33.2]{FHT}) that there exist some $C >1$ and some positive integer $K$ such that 
$$\sum_{i\geqq 2}^k \dim (\pi_i(X) \otimes \Q) \geqq C^k, \quad k \geqq K.$$

From now on, unless otherwise stated, any topological space is assumed to be simply connected and of finite type (over $\mathbb Q$), i.e., the rational homology group is finitely generated for every dimension, $\dim H_i(X; \mathbb Q) < \infty$, which implies that $\dim \left (\pi_i(X) \otimes  \mathbb Q) \right ) < \infty$ because it is well-known that a simply connected space has finitely generated homology groups in every dimension \emph{if and only if} it has finitely generated homotopy groups in every dimension (e.g., see \cite[16 Corollary, p.509]{Sp}). A very simple example of a non-simply connected space for which this statement does not hold is $S^2 \vee S^1$.

The well-known Hilali conjecture \cite{Hil} claims that if $X$ is a simply connected rationally elliptic space, then
$$\dim (\pi_*(X)\otimes \Q ) \leqq \dim H_*(X;\Q ), \quad \text{namely}, \quad P^{\pi}_X(1) \leqq P_X(1).$$ 
No counterexample to the Hilali conjecture has been so far found yet.

In \cite{Yo} the second named author proved that for a simply connected rationally elliptic space $X$ the Hilali conjecture always holds ``modulo product", i.e.,
there exists a positive integer $n_{0}$ such that for $\forall \, \, n \geqq n_{0}$
\begin{equation}\label{hil-pro}
\dim (\pi_*(X^n)\otimes \Q ) < \dim H_*(X^{n};\Q ), \, \text{i.e.,} \, P^{\pi}_{X^n}(1) < P_{X^n}(1).
\end{equation}
Here $X^n$ is the product $X^n =\underbrace{X \times \cdots \times X}_{n}$. 

In this paper we introduce the homological and homotopical Poincar\'e polynomials $P_f(t)$ and $P^{\pi}_f(t)$ of a continuous map $f:X \to Y$ and show that if $P_f(1) > 1$, i.e., there exists some integer $i >1$ such that $H_i(f;\mathbb Q):H_i(X;\mathbb Q) \to H_i(Y;\mathbb Q)$ is not injective, then there exists a positive integer $n_{0}$ such that for $\forall \, \, n \geqq n_{0}$
the strict inequality $P^{\pi}_{f^n}(1) < P_{f^n}(1) $ holds, where $f^n:X^n \to Y^n$ is defined component-wise by $(f^n)(x_1, \cdots, x_n):=(f(x_1), \cdots, f(x_n))$.  This result is a map version of the above result (\ref{hil-pro}). 

Hinted by the proof \cite{Yo} of $P^{\pi}_{X^n}(1) < P_{X^n}(1)$, we give a reasonable conjecture claiming that  if $P_f(1)=1$, then $P_f^{\pi}(1)=0$, in other words, if each homological homomorphism
 $H_i(f;\mathbb Q):H_i(X;\mathbb Q) \to H_i(Y;\mathbb Q)$ being \emph{injective} for $\forall i >1$ implies that each homotopical homomorphism $\pi_i(f;\mathbb Q):\pi_i(X)\otimes \mathbb Q \to \pi_i(Y) \otimes \mathbb Q$ is \emph{injective} $\forall i >1$. We remark that for this conjecture we assume that $X$ and $Y$ are rationally elliptic spaces
and that the conjecture is false if the homology rank of the target $Y$ is not finite, as shown by a counterexample later.
In fact, as seen in Conjecture \ref{injective}, for the above conjecture we assume that the map $f:X \to Y$ is \emph{a rationally elliptic map} (see Definition \ref{k-cok-el} below). Ellipticity of a map $f:X \to Y$ is a more lax condition than requiring $X$ and $Y$ to be rationally elliptic, in which case $f$ is certainly a rationally elliptic map.

In passing, we recall that the well-know Whitehead--Serre Theorem (e.g., see \cite[Theorem 8.6]{FHT}) claims that for simply connected spaces $X$ and $Y$, $H_i(f;\mathbb Q):H_i(X;\mathbb Q) \to H_i(Y;\mathbb Q)$ is \emph{isomorphic} for $\forall i >0$ if and only if $\pi_i(f) \otimes \mathbb Q:\pi_i(X)\otimes \mathbb Q \to \pi_i(Y) \otimes \mathbb Q$ is \emph{isomorphic} $\forall i >1$. In \cite{Yo}, to show the above (\ref{hil-pro}), we need to show that if $P_X(1)=1$, then $P^{\pi}_X(1)=0$, for which we use this Whitehead--Serre Theorem.

In the final section we discuss the case of hyperbolic spaces a bit. For a hyperbolic space $X$ we have the homotopical Hilbert--Poincar\'e series $HP^{\pi}_X(t)$ instead of the polynomial $P^{\pi}_X(t)$.
It is known (see \cite{Fe}) that the radius $r_X$ of convergence of $HP^{\pi}_X(t)$ is \emph{less than $1$}. It is in general well-known that if $r$ denotes the radius of convergence of a power series $P(t)$, then whether $P(r)$ converges or not is case-by-case. So, when $HP^{\pi}_X(r_X)$ does converge, it seems to be an interesting question if the following holds or not:
$$HP^{\pi}_X(r_X) \leqq P_X(r_X),$$
which could be called \emph {``a Hilali conjecture in the hyperbolic case"}.

\section{Homological and homotopical Poincar\'e polynomials of a map}
Let $f:X \to Y$ be a continuous map of simply connected spaces $X$ and $Y$ of finite type. For the homomorphisms
$$H_i(f; \mathbb Q):H_i(X;\mathbb Q) \to H_i(Y;\mathbb Q),\quad \pi_i(f)\otimes \mathbb Q:\pi_i(X) \otimes \mathbb Q \to \pi_i(Y) \otimes \mathbb Q,$$
we have the following exact sequences of finite dimensional $\mathbb Q$-vector spaces:
\begin{equation}\label{hkk}
0 \to \op{Ker} H_i(f; \mathbb Q) \to H_i(X; \mathbb Q) \to H_i(Y; \mathbb Q) \to \op{Coker} H_i(f;\mathbb Q) \to 0 \quad \forall i \geqq 0,
\end{equation}
\begin{equation}\label{pikk}
0 \to \op{Ker}(\pi_i(f)\otimes \mathbb Q) \to \pi_i(X)\otimes \mathbb Q \to \pi_i(Y)\otimes \mathbb Q \to \op{Coker}(\pi_i(f)\otimes \mathbb Q) \to 0 \quad \forall i \geqq 2.
\end{equation}
Here we recall that $\op{Coker}(T):= B/\op{Im}(T)$ for a linear map $T: A \to B$ of vector spaces. 

Since $X$ and $Y$ are simply connected, they are path-connected as well (by the definition of simply connectedness), thus we have
$$\xymatrix{ 
\mathbb Q \cong H_0(X;\mathbb Q) \ar[r]^{f_*}_{\cong} & H_0(Y;\mathbb Q) \cong \mathbb Q,
}
$$
so $\op{Ker} H_0(f; \mathbb Q) = \op{Coker}H_0(f; \mathbb Q)=0$.  
It follows from (\ref{hkk}) and (\ref{pikk}) that we get the following equalities:
\begin{equation}\label{d-hkk}
\op{dim}(\op{Ker} H_i(f; \mathbb Q)) - \op{dim} H_i(X; \mathbb Q)  + \op{dim} H_i(Y; \mathbb Q)  - \op{dim}(\op{Coker}H_i(f; \mathbb Q)) = 0 \quad \forall i \geqq 2,
\end{equation}
\begin{equation}\label{d-pikk}
\op{dim}(\op{Ker}(\pi_i(f)\otimes \mathbb Q)) - \op{dim}(\pi_i(X) \otimes \mathbb Q) + \op{dim}(\pi_i(Y) \otimes \mathbb Q) - \op{dim}(\op{Coker}(\pi_i(f)\otimes \mathbb Q)) = 0 \quad \forall i \geqq 2.
\end{equation}

\begin{defn}\label{k-cok-el} Let $f:X \to Y$ be a continuous map of simply connected spaces $X$ and $Y$.
\begin{enumerate}
\item If $\op{dim} \left (\op{Ker} H_*(f; \mathbb Q) \right):= \sum_i \op{dim} \left (\op{Ker} H_i(f; \mathbb Q) \right) < \infty$ and $\op{dim} \left (\op{Ker} (\pi_*(f) \otimes \mathbb Q) \right) := \sum_i \op{dim}  \left (\op{Ker}  (\pi_i(f) \otimes \mathbb Q) \right ) < \infty$,
then $f$ is called \emph{rationally elliptic with respect to kernel}.
\item If $\op{dim} \left (\op{Coker} H_*(f; \mathbb Q) \right) :=\sum_i \op{dim} \left ( \op{Coker} H_i(f; \mathbb Q)\right )< \infty,$ and $\op{dim} \left (\op{Coker}  (\pi_*(f) \otimes \mathbb Q) \right ):=\sum_i \op{dim} \left (\op{Coker}  (\pi_i(f) \otimes \mathbb Q) \right )< \infty$, 
then $f$ is called \emph{rationally elliptic with respect to cokernel}.
\item If the map $f$ is rationally elliptic with respect to both kernel and cokernel, $f$ is called \emph{rationally elliptic}.
\end{enumerate}
\end{defn}
\begin{rem} Let $f:X \to Y$ be a continuous map of simply connected spaces $X$ and $Y$.
\begin{enumerate}
\item If $X$ is rationally elliptic, then $f$ is rationally elliptic with respect to kernel.
\item If $Y$ is rationally elliptic, then $f$ is rationally elliptic with respect to cokernel.
\item If both $X$ and $Y$ are rationally elliptic, then $f$ is rationally elliptic.
\end{enumerate}
\end{rem}
In this connection we also give definitions of ``hyperbolic" one corresponding to each above.
\begin{defn} Let $f:X \to Y$ be a continuous map of simply connected spaces $X$ and $Y$.
\begin{enumerate}
\item If $\op{dim} \left(\op{Ker} H_*(f; \mathbb Q) \right) < \infty$ and $\op{dim} \left (\op{Ker} (\pi_*(f) \otimes \mathbb Q) \right) = \infty$,
then $f$ is called \emph{rationally hyperbolic with respect to kernel}.
\item If $\op{dim} \left (\op{Coker} H_*(f; \mathbb Q) \right)  < \infty$ and $\op{dim} \left (\op{Coker} (\pi_*(f) \otimes \mathbb Q) \right) = \infty$, 
then $f$ is called \emph{rationally hyperbolic with respect to cokernel}.
\item If the map $f$ is rationally hyperbolic with respect to both kernel and cokernel, $f$ is called \emph{rationally hyperbolic}.
\end{enumerate}
\end{defn}
\begin{rem}\label{hyperbol-rem} Let $f:X \to Y$ be a continuous map of simply connected spaces $X$ and $Y$.
\begin{enumerate}
\item If $f:X \to Y$ is rationally hyperbolic with respect to kernel, then the homotopy rank of $X$ is infinite.
\item If $f:X \to Y$ is rationally hyperbolic with respect to cokernel, then the homotopy rank of $Y$ is infinite.
\item If $f:X \to Y$ is rationally hyperbolic, then the homotopy rank of $X$ and that of $Y$ are both infinite.
\end{enumerate}
\end{rem}
Motivated by the definition of Poincar\'e polynomials of topological spaces, it is reasonable to make the following definitions:
\begin{defn} Let $f:X \to Y$ be a rationally ellitpic map of simply connected spaces $X$ and $Y$.
\begin{enumerate}
\item (the homological ``Kernel" Poincar\'e polynomial of a map $f$)
$$\op{Ker} P_f(t):= \sum_{i\geqq 2}  \op{dim}(\op{Ker} H_i(f; \mathbb Q) ) t^i.$$
\item  (the homotopical ``Kernel" Poincar\'e polynomial of a map $f$)
$$\op{Ker}P^{\pi}_f(t):= \sum_{i\geqq 2}  \op{dim}(\op{Ker}(\pi_i(f)\otimes \mathbb Q)) t^i.$$
\item (the homological ``Cokernel" Poincar\'e polynomial of a map $f$)
$$\op{Cok}P_f(t):= \sum_{i\geqq 2}  \op{dim}(\op{Coker} H_i(f; \mathbb Q) ) t^i.$$
\item  (the homotopical ``Cokernel" Poincar\'e polynomial of a map $f$)
$$\op{Cok}P^{\pi}_f(t):= \sum_{i\geqq 2}  \op{dim}(\op{Coker}(\pi_i(f)\otimes \mathbb Q)) t^i.$$
\end{enumerate}
\end{defn}
With these definitions, if $X$ and $Y$ are both rationally elliptic, then it follows from (\ref{d-hkk}) and (\ref{d-pikk}) that we get the following equalities:
\begin{equation}\label{kppc}
\op{Ker}P_f(t) - P_X(t) + P_Y(t) - \op{Cok}P_f(t) =0,
\end{equation}
\begin{equation}\label{pi-kppc}
\op{Ker}P^{\pi}_f(t) - P^{\pi}_X(t) + P^{\pi}_Y(t) - \op{Cok}P^{\pi}_f(t) =0.
\end{equation}
If $H_i(f; \mathbb Q)$ and $\pi_i(f) \otimes \mathbb Q$ are surjective for $\forall i  \geqq 2$, then 
$\op{Coker} H_i(f; \mathbb Q) = \op{Coker} (\pi_i(f)\otimes \mathbb Q)=0$, 
thus we have
\begin{equation}
\op{Ker}P_f(t) - P_X(t) + P_Y(t)=0,
\end{equation}
\begin{equation}
\op{Ker}P^{\pi}_f(t) - P^{\pi}_X(t) + P^{\pi}_Y(t)=0.
\end{equation}
In particular, when $Y$ is contractible, since $P_Y(t)=1$ and $P^{\pi}_Y(t)=0$, we have
\begin{equation}\label{cont-h}
P_X(t) = 1 + \op{Ker}P_f(t)
\end{equation}
\begin{equation}\label{cont-pi}
P^{\pi}_X(t) = \op{Ker} P^{\pi}_f(t).
\end{equation}

In this paper we focus mainly on continuous rationally elliptic maps with respect to kernel. Let 
$f:X \to Y$ be a continuous rationally elliptic map with respect to kernel of simply connected spaces $X$ and $Y$ and we define the following:
\begin{defn}[Homological Poincar\'e polynomial of a map]
\begin{equation}
P_f(t):= 1 + \op{Ker} P_f(t) = 1 + \sum_{i\geqq 2} t^i \op{dim} \left (\op{Ker} H_i(f; \mathbb Q) \right).
\end{equation}
\end{defn}
\begin{defn}[Homotopical Poincar\'e polynomial of a map]
\begin{equation}
P^{\pi}_f(t):=  \op{Ker} P^{\pi}_f(t) = \sum_{i\geqq 2} t^i \op{dim} \left (\op{Ker}  (\pi_i(f) \otimes \mathbb Q) \right).
\end{equation}
\end{defn}

From (\ref{cont-h}) and (\ref{cont-pi}), if $Y$ is contractible, then we have
\begin{equation}
P_f(t)= P_X(t), \quad P^{\pi}_f(t)= P^{\pi}_X(t).
\end{equation}
\section{The relative Hilali conjecture on products of maps}
In our previous paper \cite{YaYo} we made the following conjecture, called \emph{a relative Hilali conjecture}
\begin{con} For a continuous map $f:X \to Y$ of simply connected elliptic spaces $X$ and $Y$, $P^{\pi}_f(1) \leqq P_f(1)$ holds. Namely the following inequality holds:
$$\sum_{i\geqq 2} \op{dim} \left (\op{Ker}  (\pi_i(f) \otimes \mathbb Q) \right) \leqq  1 + \sum_{i\geqq 2} \op{dim} \left (\op{Ker} H_i(f; \mathbb Q) \right ).$$
\end{con}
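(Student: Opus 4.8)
The plan is to read off both sides of the inequality from the homotopy fibre of $f$, treating the homotopy side through an exact sequence and the homology side through the Serre spectral sequence, and then to recognise the conjecture as a \emph{fibrewise} Hilali inequality.

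First I would replace $f$ by a fibration and let $F$ be its homotopy fibre. Since $X$ and $Y$ are simply connected, the rational homotopy exact sequence
\[
\cdots \to \pi_{i+1}(Y)\otimes\mathbb Q \to \pi_i(F)\otimes\mathbb Q \to \pi_i(X)\otimes\mathbb Q \xrightarrow{\pi_i(f)\otimes\mathbb Q} \pi_i(Y)\otimes\mathbb Q \to \cdots
\]
shows that $F$ is connected with abelian $\pi_1(F)$, hence a nilpotent space of finite rational homotopy rank (as $X$ and $Y$ are elliptic). Exactness at $\pi_i(X)\otimes\mathbb Q$ gives $\op{Ker}(\pi_i(f)\otimes\mathbb Q)=\op{Im}\bigl(\pi_i(F)\otimes\mathbb Q \to \pi_i(X)\otimes\mathbb Q\bigr)$, so
\[
P^{\pi}_f(1)=\sum_{i\geqq 2}\op{dim}\op{Im}\bigl(\pi_i(F)\otimes\mathbb Q \to \pi_i(X)\otimes\mathbb Q\bigr).
\]
It is important here to keep only the \emph{image} of the fibre homotopy in $X$: the complementary cokernel part $\op{Cok}P^{\pi}_f(1)$ can be made arbitrarily large while both $P^{\pi}_f(1)$ and $P_f(1)$ stay small — for instance the inclusion $S^3\hookrightarrow S^3\times S^5\times S^7$ has $P^{\pi}_f(1)=0=P_f(1)-1$ yet $\op{dim}(\pi_*(F)\otimes\mathbb Q)=2$ — so the naive bound $P^{\pi}_f(1)\leqq\op{dim}(\pi_*(F)\otimes\mathbb Q)$ is far too lossy to serve as the target.

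On the homology side I would run the rational homology Serre spectral sequence of $F\to X\xrightarrow{f}Y$, with $E^2_{p,q}=H_p(Y;\mathbb Q)\otimes H_q(F;\mathbb Q)$. The base-edge homomorphism is $H_n(f;\mathbb Q)$, and its kernel is exactly the positive fibre-filtration subspace $F_{n-1}H_n(X;\mathbb Q)$, whose associated graded is $\bigoplus_{q\geqq 1}E^{\infty}_{n-q,q}$. Summing gives
\[
P_f(1)=1+\sum_{p\geqq 0,\ q\geqq 1}\op{dim}E^{\infty}_{p,q},
\]
so that $P_f(1)-1$ is the total dimension of the rational homology of $X$ that originates in the fibre. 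The conjecture thus becomes the assertion that, inside the elliptic space $X$, the rational homotopy coming from the fibre is bounded (up to the single unit) by the rational homology coming from the fibre — a fibrewise Hilali inequality. The same content is visible in the identities (\ref{pi-kppc}) and (\ref{kppc}): subtracting them reduces the claim to $(P^{\pi}_X(1)-P_X(1))+(P_Y(1)-P^{\pi}_Y(1))+(\op{Cok}P^{\pi}_f(1)-\op{Cok}P_f(1))\leqq 1$, whose first bracket is $\leqq 0$ by the Hilali conjecture for $X$.

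I would first settle the clean subclass in which the fibration is totally non-cohomologous to zero and $F$ is itself rationally elliptic: then $E^{\infty}=E^2$, so $P_f(1)=1+\op{dim}H_*(Y;\mathbb Q)\cdot(\op{dim}H_*(F;\mathbb Q)-1)$, and the chain
\[
P^{\pi}_f(1)\leqq \op{dim}(\pi_*(F)\otimes\mathbb Q)\leqq \op{dim}H_*(F;\mathbb Q)\leqq P_f(1)
\]
closes using the Hilali conjecture for $F$. The hard part is everything outside this subclass. First, the argument leans on the absolute Hilali conjecture (for $F$), which is open, so unconditionally one can only reach the classes where it is known; and in the boundary case $P_f(1)=1$ one is exactly asserting the paper's injectivity conjecture, itself unresolved. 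Second, and more seriously, when the fibration is not cohomologically split the fibre homology can cancel against $H_*(Y;\mathbb Q)$, and when $F$ fails to be elliptic (e.g. $F=\mathbb{C}P^{\infty}$, which occurs as the fibre of a map between the elliptic spaces $X\simeq_{\mathbb Q}\ast$ and $Y=S^3$) one has $\op{dim}H_*(F;\mathbb Q)=\infty$ while $P_f(1)<\infty$, so the chain above collapses and the comparison must be made directly between the surviving fibre homotopy and the surviving fibre homology. Because the cokernel part decouples and because there is no product trick available for a single map (in contrast to the theorem for $f^n$), this final comparison requires genuinely new input — presumably a relative Halperin/Hilali estimate read off the map of minimal Sullivan models $\varphi:(\Lambda U,d)\to(\Lambda V,d)$ and its relative model $(\Lambda U,d)\hookrightarrow(\Lambda U\otimes\Lambda W,D)\xrightarrow{\simeq}(\Lambda V,d)$ — and is the decisive obstacle, one that in particular subsumes the absolute Hilali conjecture (take $Y\simeq\ast$).
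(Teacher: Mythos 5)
The statement you were asked to prove is the relative Hilali conjecture itself, and the paper contains \emph{no proof} of it: it is posed as an open conjecture, with only partial cases credited to Zaim--Chouingou--Hilali \cite{ZCH}, and the paper's actual theorem (Theorem~\ref{main}) is the much weaker ``modulo product'' statement, proved by an elementary counting argument ($nP^{\pi}_f(1)$ grows linearly while $(P_f(1))^n$ grows geometrically once $P_f(1)>1$). So there is no proof in the paper to compare yours against, and your proposal — as you yourself state at the end — is not a proof either: the final comparison between surviving fibre homotopy and surviving fibre homology is explicitly left as ``the decisive obstacle,'' and since that comparison subsumes the absolute Hilali conjecture (take $Y \simeq \ast$), no argument of the kind you sketch can close it. Judged as a proof of the statement, the gap is therefore not a local step but the entire theorem.

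That said, the partial content you develop is essentially correct and goes beyond anything recorded in the paper. The identification $\op{Ker}(\pi_i(f)\otimes\mathbb Q)=\op{Im}\bigl(\pi_i(F)\otimes\mathbb Q \to \pi_i(X)\otimes\mathbb Q\bigr)$ is right, as is the Serre spectral sequence computation $\op{Ker}H_n(f;\mathbb Q)=F_{n-1}H_n(X;\mathbb Q)$ with associated graded $\bigoplus_{q\geqq 1}E^{\infty}_{n-q,q}$, and your numerics for $S^3\hookrightarrow S^3\times S^5\times S^7$ check out (the fibre has $\pi_4(F)\otimes\mathbb Q\cong\pi_6(F)\otimes\mathbb Q\cong\mathbb Q$ coming from the cokernels in degrees $5$ and $7$, so indeed $\dim(\pi_*(F)\otimes\mathbb Q)=2$ while $P^{\pi}_f(1)=0$). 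Your TNCZ special case closes correctly, but only \emph{conditionally}: the link $\dim(\pi_*(F)\otimes\mathbb Q)\leqq\dim H_*(F;\mathbb Q)$ is the absolute Hilali conjecture for $F$, which requires both that $F$ be elliptic (not automatic, as your $\Omega S^3\simeq_{\mathbb Q}\mathbb{C}P^{\infty}$ example shows) and that the open absolute conjecture hold for it — this should be flagged as two added hypotheses, not one. One small imprecision: in the boundary case $P_f(1)=1$ the relative conjecture asserts only $P^{\pi}_f(1)\leqq 1$, which is strictly weaker than the injectivity conjecture (Conjecture~\ref{injective}), which demands $P^{\pi}_f(1)=0$; ``exactly asserting'' overstates the equivalence. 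Finally, your reduction via the identities (\ref{kppc}) and (\ref{pi-kppc}) to $(P^{\pi}_X(1)-P_X(1))+(P_Y(1)-P^{\pi}_Y(1))+(\op{Cok}P^{\pi}_f(1)-\op{Cok}P_f(1))\leqq 1$ is algebraically correct, and usefully exposes why the conjecture is delicate — the middle bracket can be arbitrarily large and positive, so the absolute Hilali inequality for $X$ alone buys nothing — but it is a reformulation, not progress toward a proof.
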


When $Y$ is a point or contractible, the above relative Hilali conjecure 
is nothing but the following well-known Hilali conjecture \cite{Hil}:
\begin{con} For a simply connected elliptic space $X$, $P^{\pi}_X(1) \leqq P_X(1)$ holds. Namely the following inequlaity holds:
$$\sum_{i\geqq 2} \op{dim}(\pi_i(X) \otimes \mathbb Q) \leqq  1 + \sum_{i\geqq 2} \op{dim} H_i(X; \mathbb Q).$$
\end{con}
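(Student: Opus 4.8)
The plan is to move entirely to the rational homotopy side, translating both sides of the asserted inequality into invariants of a minimal Sullivan model and then bounding the total Betti number below by the number of model generators. Since $X$ is simply connected and rationally elliptic, it admits a minimal Sullivan model $(\Lambda V, d)$ with $\dim V < \infty$, and there is a natural isomorphism $V^i \cong \op{Hom}(\pi_i(X)\otimes \mathbb{Q}, \mathbb{Q})$ together with $H^*(\Lambda V, d) \cong H^*(X;\mathbb{Q})$. Consequently $P^{\pi}_X(1) = \dim V$ and $P_X(1) = \dim H^*(\Lambda V, d)$ (the constant term $1$ being $\dim H^0 = \dim \mathbb{Q}$), so the conjecture becomes the purely algebraic assertion $\dim V \leqq \dim H^*(\Lambda V, d)$. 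Writing $V = V^{\mathrm{even}} \oplus V^{\mathrm{odd}}$, I would record at the outset the standard structural constraints for elliptic models: the homotopy Euler characteristic gives $\dim V^{\mathrm{even}} \leqq \dim V^{\mathrm{odd}}$; the ordinary Euler characteristic satisfies $\chi(X) \geqq 0$, with $\chi(X) > 0$ precisely when $\dim V^{\mathrm{even}} = \dim V^{\mathrm{odd}}$ (equivalently $H^{\mathrm{odd}}(X;\mathbb{Q})=0$); and $H^*(X;\mathbb{Q})$ satisfies Poincar\'e duality in the formal dimension $n_X = \sum_j |y_j| - \sum_i(|x_i|-1)$, which constrains the shape of the Betti numbers.

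First I would settle the positively elliptic case $\chi(X) > 0$. Here $\dim V^{\mathrm{even}} = \dim V^{\mathrm{odd}} =: m$, so $\dim V = 2m$, the model is pure, and $\dim H^*(X;\mathbb{Q}) = \chi(X) = \dim H^{\mathrm{even}}(X;\mathbb{Q})$. In this situation the cohomology is a complete intersection governed multiplicatively by the even generators, yielding an exponential lower bound of the form $\dim H^*(X;\mathbb{Q}) \geqq 2^{m}$; since $2^m \geqq 2m$ for every $m \geqq 1$, the desired inequality $\dim V \leqq \dim H^*(X;\mathbb{Q})$ follows at once. The same Koszul-type computation handles the more general \emph{pure} elliptic models, and one can also dispose of the formal and coformal cases, and of cases with $\dim V$ small, by explicit bookkeeping on the model.

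For the general elliptic case I would attempt to reduce to the pure situation by filtering $(\Lambda V, d)$ by its odd generators, whose associated graded is a pure Sullivan algebra covered by the previous step. The obstruction is that the resulting (odd) spectral sequence only yields $\dim H^*(\Lambda V, d) \leqq \dim H^*(\text{pure model})$, which is the \emph{wrong} direction for a lower bound. I would therefore instead try induction on $\dim V$, adjoining one generator at a time: adjoining an even $x$ with $dx=0$ tensors the cohomology with a (later truncated) polynomial factor, while adjoining an odd $y$ with $dy = z$ a cocycle replaces $H(A)$ by $\op{coker}(\cdot[z]) \oplus \op{Ker}(\cdot[z])$, so that the change in total dimension is $\dim H(A) - 2\,\mathrm{rank}(\cdot[z])$.

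The main obstacle is visible precisely in that last formula: when $[z]\neq 0$ the multiplication $\cdot[z]$ can have large rank, so a single elementary extension may \emph{decrease} the total Betti number, and any naive ``net gain of at least one per generator'' induction fails. Controlling this cumulative cancellation — showing that across all $\dim V$ extensions the surviving cohomology can never fall below the number of generators — is exactly the analytic heart of the problem, and it is where the difficulty becomes genuinely global rather than step-by-step. Honesty compels me to record that this is the point at which the argument stalls in full generality: the statement is the classical Hilali conjecture, which remains \emph{open}, so the realistic target of this proposal is a complete proof in the positively elliptic, pure, formal, and low-rank regimes, together with an isolation of the cancellation phenomenon above as the precise remaining barrier.
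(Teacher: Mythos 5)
You have correctly identified the decisive fact: this statement is not a theorem of the paper at all. It is the classical Hilali conjecture, which the paper merely records as a conjecture (explicitly noting that ``no counterexample \dots has been so far found yet''), so there is no proof in the paper to compare against, and no blind attempt could legitimately close it. Your translation to the minimal Sullivan model is the standard and correct starting point: $P^{\pi}_X(1)=\dim V$, $P_X(1)=\dim H^*(\Lambda V,d)$, the constraint $\dim V^{\mathrm{even}}\leqq \dim V^{\mathrm{odd}}$, and the dichotomy $\chi(X)>0 \Leftrightarrow \dim V^{\mathrm{even}}=\dim V^{\mathrm{odd}} \Leftrightarrow H^{\mathrm{odd}}=0$ are all right, and your resolution of the positively elliptic case via the complete-intersection bound $\dim H^*(X;\mathbb Q)\geqq 2^m \geqq 2m=\dim V$ is sound and classical. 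Your bookkeeping for elementary extensions is also correct: adjoining an odd generator $y$ with $dy=z$ yields $\dim H(A\otimes\Lambda y)=2\dim H(A)-2\,\mathrm{rank}(\cdot[z])$ via the Gysin-type exact sequence, and you rightly observe both that the odd spectral sequence gives the inequality $\dim H(\Lambda V,d)\leqq \dim H(\text{pure model})$ in the useless direction and that the possible collapse of cohomology under a single extension kills any step-by-step induction. That cancellation phenomenon is precisely why the conjecture is open, so flagging it as the barrier rather than papering over it is the honest and correct conclusion; the only caveat I would add is that your parenthetical claim that ``the same Koszul-type computation handles the more general pure elliptic models'' is asserted too quickly, since when $\dim V^{\mathrm{odd}}>\dim V^{\mathrm{even}}$ the cohomology is no longer a complete intersection and the pure case requires a separate argument.

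It is worth noting how the paper itself navigates this impasse, because it is instructive: rather than attacking the conjecture, the authors (following the second author's earlier work) prove an \emph{asymptotic} substitute, Theorem \ref{hilali-pro} and its map version Theorem \ref{main} --- namely that $P^{\pi}_{X^n}(1)<P_{X^n}(1)$ for all large $n$. The mechanism there is exactly orthogonal to where your induction stalls: under products the homotopical side grows \emph{additively}, $P^{\pi}_{X^n}(1)=n\,P^{\pi}_X(1)$, while the homological side grows \emph{multiplicatively}, $P_{X^n}(1)=(P_X(1))^n$, so once $P_X(1)>1$ the elementary limit $n r^n\to 0$ for $|r|<1$ forces the strict inequality for $n$ large, with the residual case $P_X(1)=1$ handled by the Whitehead--Serre theorem ($P_X(1)=1 \Rightarrow P^{\pi}_X(1)=0$). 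So the paper's technique deliberately avoids the per-generator cancellation problem you isolated; your proposal, had it succeeded, would have proved strictly more than anything the paper establishes.
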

\begin{rem} We note that in the Hilali conjecture the inequality $\leqq$ cannot be replaced by the strict inequality $<$. Indeed, for example, if $X=S^{2k}$ the even dimensional sphere, we have
$$\pi_i(S^{2k}) \otimes \Q 
=\begin{cases} 
\Q & \, i=2k\\
 \Q & \, i=4k-1\\
\, 0  & \,  i\not =2k, 4k-1.
\end{cases}
$$
Thus we have $P^{\pi}_{S^{2k}}(t) = t^{4k-1} + t^{2k} \, \text{and} \, P_{S^{2k}}(t) = t^{2k} +1.$ Hence $P^{\pi}_{S^{2k}}(1) = P_{S^{2k}}(1) = 2.$
\end{rem}
In \cite{ZCH} (cf. \cite{CHHZ}) A. Zaim, S. Chouingou and M. A. Hilali have proved the above relative Hilali conjecture in some cases.

Since we define the notion of rationally elliptic map with respect to kernel in the previous section, in the original version of this paper we speculated that the above relative Hilali conjecture could be furthermore generalized as follows:

\emph{``(A generalized relative Hilali conjecture): Let $f:X \to Y$  be a continuous rationally elliptic map with respect to kernel of simply connected spaces $X$ and $Y$. Then
$P^{\pi}_f(1) \leqq P_f(1)$ holds." }

It turns out that this conjecture is false due to the following counterexample, which was given by the referee:
\begin{ex}\label{c-ex}
Consider the following map
$$f: S^4 \times S^6 \to K(\mathbb Q, 4) \times K(\mathbb Q, 6)$$
which is defined by $f:= a \times b$. Here $a:S^4 \to K(\mathbb Q, 4)$ is such that $[a] \in [S^4, K(\mathbb Q, 4)]=H^4(S^4, \mathbb Q)= \mathbb Q$ is a generator and similar for $b: S^6 \to K(\mathbb Q, 6)$.
Then we have 
$$ P^{\pi}_f(1) = \dim (\op{Ker} (\pi_*(f) \otimes \mathbb Q)) = 2, \quad P_f(1)=1, \, \, \text{i.e.,} \, \, \dim \left (\op{Ker} (H_*(f; \mathbb Q)) \right)= 0.$$
Thus $P^{\pi}_f(1) \not \leqq P_f(1)$. 
\end{ex}
Here we note that in this counterexample $\dim H_*(K(\mathbb Q, 4) \times K(\mathbb Q, 6); \mathbb Q) =\infty$ although we have that $\dim \left (\pi_*(K(\mathbb Q, 4) \times K(\mathbb Q, 6)) \otimes \mathbb Q \right) <\infty$. So, if in the above generalized Hilali conjecture we add another requirement that the homology rank of the target $Y$ is finite, 
then it follows from (\ref{kppc}) with $t=1$  
that the homology rank of the source $X$ has to be automatically finite. 
If we furthermore require that the target $Y$ is rationally elliptic, then it follows from (\ref{kppc}) and (\ref{pi-kppc}) with $t=1$ that the source $X$ has to be automatically also rationally elliptic, thus it becomes the original relative Hilali conjecture. So, we would like to pose the following slightly modified conjecture:
\begin{con} (A generalized relative Hilali conjecture) \label{grhc}
Let $f:X \to Y$  be a continuous rationally elliptic map with respect to kernel of simply connected spaces $X$ and $Y$. 
If the homology rank of the target $Y$ is finite, then
$P^{\pi}_f(1) \leqq P_f(1)$ holds. 
\end{con}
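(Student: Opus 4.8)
The plan is to recast both sides of the inequality through Sullivan minimal models, reducing it to a relative form of the ``generators are controlled by cohomology'' principle that lies behind the ordinary Hilali conjecture. First I would record that the hypotheses already pin down the homological side: since $\op{Coker} H_i(f;\mathbb Q)$ is a quotient of $H_i(Y;\mathbb Q)$, finiteness of the homology rank of $Y$ gives $\op{dim}(\op{Coker} H_*(f;\mathbb Q)) \leqq \op{dim} H_*(Y;\mathbb Q) < \infty$, and then the $t=1$ specialization of (\ref{kppc}) forces $P_X(1) = \op{Ker}P_f(1) + P_Y(1) - \op{Cok}P_f(1) < \infty$. Hence every homological quantity in the statement is finite, \emph{even though neither $X$ nor $Y$ need be rationally elliptic} --- this is exactly why the conjecture is strictly stronger than the original relative one.

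Next I would choose a minimal Sullivan model of $f$, i.e. a morphism $\phi : (\Lambda V_Y, d_Y) \to (\Lambda V_X, d_X)$ between the minimal models of $Y$ and $X$ realizing $f$. Writing $\phi_0 : V_Y \to V_X$ for its linear part, the standard dictionary of rational homotopy theory identifies $\phi_0$ in degree $i$ with the dual of $\pi_i(f)\otimes\mathbb Q$, whence $\op{dim}(\op{Ker}(\pi_i(f)\otimes\mathbb Q)) = \op{dim}(\op{Coker}(\phi_0 : V_Y^i \to V_X^i))$; dually $\op{dim}(\op{Ker} H_i(f;\mathbb Q)) = \op{dim}(\op{Coker}(H^i(\phi)))$. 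With these identifications the desired inequality $P^{\pi}_f(1) \leqq P_f(1)$ becomes
\begin{equation*}
\sum_{i\geqq 2}\op{dim}\left(\op{Coker}(\phi_0 : V_Y^i \to V_X^i)\right) \;\leqq\; 1 + \sum_{i\geqq 2}\op{dim}\left(\op{Coker}(H^i(\phi))\right),
\end{equation*}
that is, the cokernel of $\phi$ measured on \emph{generators} must be bounded, up to the single additive unit, by its cokernel measured on \emph{cohomology}.

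The heart of the argument is then to manufacture, for each generator of $V_X$ outside the image of $\phi_0$, a cohomology class of $(\Lambda V_X, d_X)$ outside the image of $H^*(\phi)$. When $d_X$ is decomposable on the relevant generators --- for instance when $X$ is formal or two-stage --- each such surviving generator injects into the indecomposables of $H^*(\Lambda V_X)$ and the count is immediate, with the additive $1$ absorbing the unique unavoidable coincidence coming from the fundamental class (compare the even-sphere equality $P^{\pi}_{S^{2k}}(1) = P_{S^{2k}}(1)$). The difficulty, which I expect to be the decisive obstacle, is that a generator may contribute nothing to cohomology, being either a boundary or decomposable there, and controlling this loss \emph{relatively} is precisely the phenomenon that keeps the classical Hilali conjecture open: setting $Y$ contractible collapses the present statement to $P^{\pi}_X(1)\leqq P_X(1)$ for rationally elliptic $X$. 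A realistic route is therefore to prove the inequality conditionally on, or within the ranges already covered by, the classical Hilali conjecture and its relative form established in \cite{ZCH}, and to supplement this with the product mechanism of the present paper, which unconditionally yields the strict inequality $P^{\pi}_{f^n}(1) < P_{f^n}(1)$ once $n$ is large, whenever some $H_i(f;\mathbb Q)$ fails to be injective.
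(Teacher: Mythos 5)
The statement you are asked about is posed in the paper as a \emph{conjecture} (Conjecture \ref{grhc}); the paper offers no proof of it, only a counterexample to a stronger version (Example \ref{c-ex}), a verification of some consistency consequences of the added hypothesis, and a weaker ``modulo product'' theorem (Theorem \ref{main}). Your proposal likewise does not prove it, and to your credit you say so: the entire argument hinges on the unproved claim that each generator of $V_X$ outside the image of the linear part $\phi_0$ can be matched with a cohomology class of $X$ outside the image of $H^*(\phi)$, and you only indicate how this would go under extra hypotheses (formality, two-stage models, decomposability of $d_X$ on the relevant generators). That missing step is not a technical detail one could expect to fill in: as you yourself observe, taking $Y$ contractible specializes the statement to $P^{\pi}_X(1)\leqq P_X(1)$ for a rationally elliptic $X$, i.e.\ to the classical Hilali conjecture, which is open. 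So any unconditional proof along your lines would in particular settle that conjecture; the gap is therefore essential, not cosmetic.

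That said, the reductions you do carry out are sound and agree with what the paper establishes around the conjecture: the finiteness of the homological side from $\op{dim}H_*(Y;\mathbb Q)<\infty$ together with the $t=1$ specialization of (\ref{kppc}) (forcing $\op{dim}H_*(X;\mathbb Q)<\infty$) is exactly the paper's justification for adding that hypothesis after Example \ref{c-ex}; the duality identifications $\op{dim}\op{Ker}(\pi_i(f)\otimes\mathbb Q)=\op{dim}\op{Coker}(\phi_0:V_Y^i\to V_X^i)$ and $\op{dim}\op{Ker}H_i(f;\mathbb Q)=\op{dim}\op{Coker}(H^i(\phi))$ are correct for finite-type spaces; and your fallback --- the partial results of \cite{ZCH} plus the product mechanism giving $P^{\pi}_{f^n}(1)<P_{f^n}(1)$ for large $n$ when some $H_i(f;\mathbb Q)$ is not injective --- is precisely the evidence the paper itself assembles. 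You should present this as a reformulation of the conjecture in terms of Sullivan models together with partial results, not as a proof.
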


In \cite{Yo} (cf. \cite{Yo2}) the second named author has proved the following
\begin{thm}[\emph{Hilali conjecture ``modulo product"}]\label{hilali-pro} Let $X$ be a  
rationally elliptic space such that its fundamental group is an Abelian group. Then there exists some integer $n_0$ such that for $\forall \, \, n\geqq n_0$ the strict inequality $ P^{\pi}_{X^n}(1) < P_{X^n}(1)$ holds, i.e.,
\begin{equation}\label{pro-hilali}
\op{dim} \left (\pi_*(X^n)\otimes \Q \right ) < \op{dim} H_*(X^{n};\Q ). 
\end{equation}
\end{thm}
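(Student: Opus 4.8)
The plan is to reduce the claimed inequality $P^{\pi}_{X^n}(1) < P_{X^n}(1)$ to an elementary comparison between a linear and an exponential function of $n$, exploiting the fact that the two invariants transform oppositely under Cartesian products. Write $a := P^{\pi}_X(1) = \dim(\pi_*(X) \otimes \mathbb{Q})$ and $b := P_X(1) = \dim H_*(X;\mathbb{Q})$; both are finite because $X$ is rationally elliptic. Since the rational homotopy groups split over a product, $\pi_i(X^n) \otimes \mathbb{Q} \cong \bigoplus^{n}(\pi_i(X) \otimes \mathbb{Q})$, the homotopical Poincar\'e polynomial is additive and $P^{\pi}_{X^n}(1) = n\,a$. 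Since the rational K\"unneth theorem gives $H_*(X^n;\mathbb{Q}) \cong H_*(X;\mathbb{Q})^{\otimes n}$, the homological Poincar\'e polynomial is multiplicative and $P_{X^n}(1) = b^{\,n}$. Hence the theorem is equivalent to the assertion that $n\,a < b^{\,n}$ for all sufficiently large $n$.

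First I would record that $b \geq 1$, because $X$ is path-connected (being simply connected) and of finite type, so $H_0(X;\mathbb{Q}) = \mathbb{Q}$ already contributes $1$ to $b$. The argument then divides into two cases according to whether $b \geq 2$ or $b = 1$. It is worth stressing that neither case uses the (still open) Hilali inequality $a \leq b$ itself; the product operation supplies exactly the slack needed.

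If $b \geq 2$, then $b^{\,n} \geq 2^{\,n}$ grows exponentially while $n\,a$ grows only linearly, so $b^{\,n}/n \to \infty$ and there is a threshold $n_0$ with $n\,a < b^{\,n}$ for every $n \geq n_0$. This case is purely a matter of asymptotics and $n_0$ can be made explicit in terms of the fixed integers $a$ and $b$.

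The case $b = 1$ is where the only genuine topological input is required, and this is where I expect the main obstacle to lie. Here $\dim H_*(X;\mathbb{Q}) = 1$ forces $H_i(X;\mathbb{Q}) = 0$ for all $i > 0$, so $X$ is rationally acyclic, and the crux is to deduce that $a = 0$, i.e. the implication ``$P_X(1) = 1 \Rightarrow P^{\pi}_X(1) = 0$''. I would obtain it from the Whitehead--Serre theorem applied to the constant map $X \to \mathrm{pt}$: this map is a rational homology isomorphism, hence a rational homotopy isomorphism, which forces every $\pi_i(X) \otimes \mathbb{Q}$ to vanish. In the simply connected case this is immediate; for a merely Abelian fundamental group one invokes the corresponding rational Whitehead theorem for nilpotent spaces. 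With $a = 0$ we get $n\,a = 0 < 1 = b^{\,n}$ for every $n \geq 1$. Combining the two cases — taking $n_0$ to be the asymptotic threshold when $b \geq 2$, and $n_0 = 1$ when $b = 1$ — yields the desired integer and completes the proof.
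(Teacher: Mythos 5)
Your proposal is correct and follows essentially the same route as the paper: additivity of $P^{\pi}$ and multiplicativity of $P$ under products reduce the claim to $na < b^n$, the case $b\geqq 2$ is handled by the elementary limit $nr^n \to 0$ for $|r|<1$, and the case $b=1$ is settled by the Whitehead--Serre theorem applied to $X \to \mathrm{pt}$ to conclude $P^{\pi}_X(1)=0$. This is exactly the argument the authors cite from \cite{Yo} and reproduce in the remark following Theorem \ref{main} and in the proof of Theorem \ref{main} itself.
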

In this section, as a ``map version" of the above theorem, we show the following theorem, in which we do not require that the homology rank of the target $Y$ is finite (hence the homology rank of the source $X$ is automatically finite as explained above), instead we require that the homology rank of the source $X$ is finite:
\begin{thm}[A generalized relative Hilali conjecture ``modulo product"]\label{main} Let $f:X \to Y$ be a continuous rationally elliptic map with respect to kernel of simply connected spaces $X$ and $Y$ such that 
the homology rank of the source $X$ is finite. If $P_f(1) > 1$, i.e., there exists some integer $i$ such that $H_i(f;\mathbb Q):H_i(X;\mathbb Q) \to H_i(Y;\mathbb Q)$ is not injective, then there exists some integer $n_0$ such that for $\forall \, \, n\geqq n_0$ the strict inequality $P^{\pi}_{f^n}(1) < P_{f^n}(1)$ holds, i.e., 
\begin{equation}\label{pro-rel-hilali}
\sum_{i\geqq 2} \op{dim} \left (\op{Ker} (\pi_i(f^n) \otimes \mathbb Q)  \right )<  1 + \sum_{i\geqq 2} \op{dim} \left (\op{Ker} H_i(f^n; \mathbb Q)  \right ).
\end{equation}
\end{thm}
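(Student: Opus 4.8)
The plan is to reduce the statement to a comparison between a \emph{linearly} growing quantity and an \emph{exponentially} growing one, exactly in the spirit of the proof of Theorem \ref{hilali-pro}, but now carried out on the kernels that define $P_{f^n}$ and $P^{\pi}_{f^n}$. First I would dispose of the homotopical side, which is the easy one. Since $\pi_i(X^n)\otimes \Q \cong (\pi_i(X)\otimes \Q)^{\oplus n}$ and the induced map $\pi_i(f^n)\otimes \Q$ is the $n$-fold diagonal direct sum $(\pi_i(f)\otimes \Q)^{\oplus n}$, its kernel is $(\op{Ker}(\pi_i(f)\otimes \Q))^{\oplus n}$. Summing over $i\geqq 2$ gives
$$P^{\pi}_{f^n}(1) = \sum_{i\geqq 2}\op{dim}(\op{Ker}(\pi_i(f^n)\otimes \Q)) = n\sum_{i\geqq 2}\op{dim}(\op{Ker}(\pi_i(f)\otimes \Q)) = n\, P^{\pi}_f(1),$$
a number that is finite (because $f$ is rationally elliptic with respect to kernel) and \emph{linear} in $n$.

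Next I would treat the homological side. Write $\phi:= H_*(f;\Q): H_*(X;\Q)\to H_*(Y;\Q)$, and set $v:= \op{dim} H_*(X;\Q) = P_X(1)$ (finite by hypothesis) and $c:= \op{rank}\phi$. By the Künneth isomorphism over the field $\Q$ together with functoriality, the map induced by $f^n$ on rational homology is the tensor power $\phi^{\otimes n}: H_*(X;\Q)^{\otimes n}\to H_*(Y;\Q)^{\otimes n}$. The key algebraic fact is that over a field $\op{Im}(\phi^{\otimes n}) = (\op{Im}\phi)^{\otimes n}$, so that $\op{rank}(\phi^{\otimes n}) = c^n$, whence by rank--nullity $\op{dim}(\op{Ker}(\phi^{\otimes n})) = v^n - c^n$. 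Since $f$ induces an isomorphism on $H_0$ and $H_1(X^n;\Q)=0$, the degree-$0$ and degree-$1$ parts contribute nothing to the kernel, and therefore
$$P_{f^n}(1) = 1 + \sum_{i\geqq 2}\op{dim}(\op{Ker} H_i(f^n;\Q)) = 1 + \op{dim}(\op{Ker}(\phi^{\otimes n})) = 1 + v^n - c^n.$$

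Finally I would compare growth rates. The hypothesis $P_f(1)>1$ means $\op{Ker}\phi\neq 0$ in some positive degree; since the degree-$0$ summand (of dimension $1$) never lies in $\op{Ker}\phi$, this forces $v\geqq 2$ and $c = v - \op{dim}(\op{Ker}\phi) \leqq v-1 < v$. Consequently $v^n - c^n \sim v^n$ grows \emph{exponentially}, while $P^{\pi}_{f^n}(1) = n\,P^{\pi}_f(1)$ grows only linearly. Hence the difference $P_{f^n}(1) - P^{\pi}_{f^n}(1) = 1 + v^n - c^n - n\,P^{\pi}_f(1)$ tends to $+\infty$, so there is an integer $n_0$ with $P^{\pi}_{f^n}(1) < P_{f^n}(1)$ for all $n\geqq n_0$, which is precisely (\ref{pro-rel-hilali}).

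I expect the only point requiring genuine care to be the homological computation $\op{rank}(\phi^{\otimes n}) = c^n$: one must invoke Künneth with field coefficients (so that no $\mathrm{Tor}$ terms appear and $H_*(X^n;\Q)\cong H_*(X;\Q)^{\otimes n}$ holds as graded spaces with the induced map being $\phi^{\otimes n}$), use that finiteness of the homology rank of $X$ makes $H_*(X;\Q)$ finite-dimensional (so that the tensor powers and rank--nullity are legitimate \emph{even though} $H_*(Y;\Q)$ may be infinite-dimensional), and then verify the multiplicativity of rank under tensor product. Everything after that is the elementary fact that an exponential eventually dominates a linear function.
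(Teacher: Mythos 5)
Your proof is correct, but it reaches the homological estimate by a genuinely different route from the paper. The paper does not compute $P_{f^n}(1)$ exactly; instead it proves a submultiplicativity inequality $P_{f_1}(t)\times P_{f_2}(t)\leqq P_{f_1\times f_2}(t)$ for all $t\geqq 0$ (Proposition \ref{prop-multi}) by exhibiting an explicit direct sum of tensor products of kernels, namely $\bigl(\op{Ker}H_i(f_1;\Q)\otimes H_0(X_2;\Q)\bigr)\oplus\bigl(H_0(X_1;\Q)\otimes\op{Ker}H_i(f_2;\Q)\bigr)\oplus\sum_{i=j+k}\op{Ker}H_j(f_1;\Q)\otimes\op{Ker}H_k(f_2;\Q)$, sitting inside $\op{Ker}H_i(f_1\times f_2;\Q)$, which yields $(P_f(1))^n\leqq P_{f^n}(1)$; it then concludes exactly as you do, from $nP^{\pi}_f(1)/(P_f(1))^n\to 0$. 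You instead use rank--nullity together with $\op{rank}(\phi^{\otimes n})=(\op{rank}\phi)^n$ to get the closed formula $P_{f^n}(1)=1+v^n-c^n$ with $v=\dim H_*(X;\Q)$ and $c=\op{rank}H_*(f;\Q)$, which is a cleaner and strictly stronger statement (it sidesteps any need to describe $\op{Ker}(\phi^{\otimes n})$ as a subspace, and it is consistent with the paper's remark that equality in Proposition \ref{prop-multi}(2) fails in general). What the paper's weaker inequality buys in exchange is generality: it is stated for products of two \emph{different} maps and for every $t\geqq 0$, and is reused for Corollary \ref{cor-1} and for the variant with $P^{\pi}_{f^n}(s)<P_{f^n}(s)$ at arbitrary $s>0$, whereas your closed formula as written is tailored to the $n$-fold power of a single map evaluated at $t=1$ (though it adapts to other $t$ with a graded version of the same count). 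All the points you flag as needing care --- naturality of the K\"unneth isomorphism with field coefficients, finiteness of $v$ so that rank--nullity applies even when $H_*(Y;\Q)$ is infinite-dimensional, $\op{Im}(\phi^{\otimes n})=(\op{Im}\phi)^{\otimes n}$ over a field, and $v\geqq 2$, $c\leqq v-1$ forced by $P_f(1)>1$ together with injectivity on $H_0$ --- are indeed the right ones and all check out.
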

\begin{rem} Note that if $Y$ is contractible, then the formula (\ref{pro-rel-hilali}) becomes the formula (\ref{pro-hilali}).
In this case, the above requirement $P_f(1) > 1$ becomes $P_X(1) >1$, which can be dropped, namely $P_X(1)=1$ can be allowed. As explained in the introduction, by using Whitehead--Serre Theorem we can show that if $P_X(1)=1$, then $P_X^{\pi}(1)=0$. Thus for $\forall n \geqq n_0=1$ we have $0=n(P^{\pi}_X(1))= P^{\pi}_{X^n}(1) < P_{X^n}(1) = (P_X(1))^n=1$.
\end{rem}
\begin{rem} 
In the above Theorem \ref{main} we pose the condition that the homology rank of the source $X$ is finite. This is needed so that any product $f^n:X^n \to Y^n$ is also rationally elliptic with respect to kernel, thus we can consider the Poincar\'e polynomial $P_{f^n}(t)$ and a finite integer $P_{f^n}(1)$. The crucial condition is that $P_f(1) > 1$, i.e., $\dim \left (\op{Ker} (H_*(f; \mathbb Q)) \right) \not = 0$ unlike the above counterexample Example \ref{c-ex}. 
If in the theorem we drop the condition that the homology rank of the source $X$ is finite, then $\sum_{i\geqq 2} \op{dim} \left (\op{Ker} (H_i(f^n; \mathbb Q) ) \right ) =\infty$ can happen and in this case $P_{f^n}(t)$ becomes \emph{a Hilbert--Poincar\'e power series $HP_{f^n}(t)$, not a polynomial}. In this case the above strict inequality (\ref{pro-rel-hilali}) automatically holds because the left-hand side is always finite and the right-hand-side is $\infty$. In this sense, we could drop the condition that the homology rank of the source $X$ is finite, if we are allowed to understand $P_{f^n}(t)$ as the Hilbert--Poincar\'e series $HP_{f^n}(t)$ for the obvious strict inequality $P^{\pi}_{f^n}(1) < P_{f^n}(1) =\infty$.
\end{rem}


A key ingredient for the proof of the above Theorem \ref{hilali-pro} is the following 
\emph{multiplicativity} of the homological Poincar\'e polynomial and \emph{additivity} of the homotopy Poincar\'e polynomial:
\begin{equation}\label{x+}
P_{X \times Y}(t) = P_X(t) \times P_Y(t), \quad P^{\pi}_{X \times Y}(t) = P^{\pi}_X(t) + P^{\pi}_Y(t).
\end{equation} 
In order to prove the above Theorem \ref{main} first we show the following ``map version" of the above multiplicativity and additivity (\ref{x+}): 
\begin{pro}\label{prop-multi} For two rationally elliptic maps with respect to kernels $f_1:X_1 \to Y_1, f_2:X_2 \to Y_2$, where $X_i, Y_i (i=1,2)$ are simply connected spaces such that 
both $X_1$ and $X_2$ have the finite homology rank 
, we have the following formulas:
\begin{enumerate}
\item $P^{\pi}_{f_1 \times f_2}(t) = P^{\pi}_{f_1}(t) + P^{\pi}_{f_2}(t),$ for $\forall t$
\item $P_{f_1}(t) \times P_{f_2}(t) \leqq P_{f_1 \times f_2}(t)$ for $\forall t \geqq 0$.
\end{enumerate}
\end{pro}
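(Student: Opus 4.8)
The plan is to handle the two assertions by different mechanisms: the homotopical identity (1) is a direct-sum phenomenon, whereas the homological inequality (2) rests on the K\"unneth formula together with a rank computation for tensor products of linear maps, followed by an elementary positivity argument.

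For (1) I would begin from the natural splitting $\pi_i(X_1 \times X_2) \otimes \Q \cong (\pi_i(X_1)\otimes\Q) \oplus (\pi_i(X_2)\otimes\Q)$, under which the rationalized induced map $\pi_i(f_1 \times f_2)\otimes\Q$ is exactly the direct sum $(\pi_i(f_1)\otimes\Q)\oplus(\pi_i(f_2)\otimes\Q)$. Since the kernel of a direct sum of linear maps is the direct sum of the kernels, $\dim\op{Ker}(\pi_i(f_1\times f_2)\otimes\Q) = \dim\op{Ker}(\pi_i(f_1)\otimes\Q) + \dim\op{Ker}(\pi_i(f_2)\otimes\Q)$ for every $i\geqq 2$; multiplying by $t^i$ and summing yields (1), with no positivity hypothesis needed.

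For (2) the first step is the K\"unneth isomorphism over the field $\Q$, namely $H_n(X_1\times X_2;\Q) \cong \bigoplus_{p+q=n} H_p(X_1;\Q)\otimes H_q(X_2;\Q)$, under which $H_n(f_1\times f_2;\Q)$ becomes $\bigoplus_{p+q=n} H_p(f_1;\Q)\otimes H_q(f_2;\Q)$. The essential linear-algebra input is $\op{Im}(A\otimes B)=\op{Im}(A)\otimes\op{Im}(B)$, equivalently $\op{rank}(A\otimes B)=\op{rank}(A)\,\op{rank}(B)$, for linear maps of finite-dimensional $\Q$-spaces; writing $v_j=\dim V_j$ and $k_j=\dim\op{Ker}$, rank--nullity gives the key formula $\dim\op{Ker}(A\otimes B)=k_1 v_2+k_2 v_1-k_1 k_2$. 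Applying this to each summand with $A=H_p(f_1;\Q)$, $B=H_q(f_2;\Q)$ and recognizing the three resulting convolutions as Cauchy products of generating functions, I expect to arrive at the closed form
\[
\op{Ker}P_{f_1\times f_2}(t) = \op{Ker}P_{f_1}(t)\,P_{X_2}(t) + \op{Ker}P_{f_2}(t)\,P_{X_1}(t) - \op{Ker}P_{f_1}(t)\,\op{Ker}P_{f_2}(t).
\]
The finiteness of the homology rank of $X_1$ and $X_2$ is used here to guarantee that these are genuine polynomials and that $f_1\times f_2$ is again rationally elliptic with respect to kernel, so that $P_{f_1\times f_2}(t)$ is defined.

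The final step is purely algebraic. Setting $K_j:=\op{Ker}P_{f_j}(t)$ and $P_j:=P_{X_j}(t)$ and using $P_{f_j}(t)=1+K_j$, the claim $P_{f_1}(t)P_{f_2}(t)\leqq P_{f_1\times f_2}(t)$ reduces, after expanding and cancelling the common constant term, to
\[
K_1(P_2-1)+K_2(P_1-1)-2K_1K_2 \geqq 0.
\]
Since $\op{Ker}H_i(f_j;\Q)\subseteq H_i(X_j;\Q)$ gives $K_j\leqq P_j-1$ coefficientwise (recall $\dim H_1=0$ by simple connectedness), for $t\geqq 0$ one has $P_j-1\geqq K_j\geqq 0$, whence $K_1(P_2-1)\geqq K_1K_2$ and $K_2(P_1-1)\geqq K_1K_2$; adding these proves the inequality. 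I expect the main obstacle to be the middle step: correctly deriving the closed form for $\op{Ker}P_{f_1\times f_2}(t)$, i.e.\ combining the K\"unneth decomposition with the tensor-product kernel formula and checking that the constant term $1$ of $P_{f_1\times f_2}$ arises exactly once (from the $p=q=0$ summand, where both induced maps are isomorphisms) so that no stray term disturbs the final positivity argument.
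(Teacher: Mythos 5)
Your proof is correct, and part (1) coincides with the paper's argument; but for part (2) you take a genuinely different route. The paper does not compute $\op{Ker}H_i(f_1\times f_2;\Q)$ exactly: it merely exhibits the explicit subspace $\bigl(\op{Ker}H_i(f_1)\otimes H_0(X_2)\bigr)\oplus\bigl(H_0(X_1)\otimes\op{Ker}H_i(f_2)\bigr)\oplus\sum_{j+k=i,\,j,k\geqq 2}\op{Ker}H_j(f_1)\otimes\op{Ker}H_k(f_2)$ inside the kernel, bounds the dimension from below by the dimension of this subspace, and sums with $t^i$ for $t\geqq 0$; the product $P_{f_1}(t)P_{f_2}(t)$ appears directly as that lower bound, so no further manipulation is needed. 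You instead use the rank identity $\op{rank}(A\otimes B)=\op{rank}(A)\op{rank}(B)$ to get the exact count $\dim\op{Ker}(A\otimes B)=k_1v_2+k_2v_1-k_1k_2$, hence the closed form $\op{Ker}P_{f_1\times f_2}=\op{Ker}P_{f_1}\cdot P_{X_2}+P_{X_1}\cdot\op{Ker}P_{f_2}-\op{Ker}P_{f_1}\cdot\op{Ker}P_{f_2}$, and then deduce the inequality from the coefficientwise bound $\op{Ker}P_{f_j}(t)\leqq P_{X_j}(t)-1$ for $t\geqq 0$. Your computation actually settles the ``complete description'' that the authors explicitly say they do not need (and do not carry out), which is a genuine strengthening: it yields equality information, not just the inequality. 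The trade-off is that your final step requires the extra (but easy) positivity argument $K_1(P_2-1)+K_2(P_1-1)\geqq 2K_1K_2$, whereas the paper's inclusion argument lands on $P_{f_1}(t)P_{f_2}(t)$ immediately; both approaches use $t\geqq 0$ in an essential way, and both use the finiteness of the homology ranks of $X_1$ and $X_2$ only to ensure $f_1\times f_2$ remains rationally elliptic with respect to kernel.
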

\begin{proof}
The proof is straightforward, but we give a proof for the sake of completeness.

First we observe that
$\pi_i(f_1 \times f_2) \otimes \mathbb Q: \pi_i(X_1 \times X_2) \otimes \mathbb Q \to \pi_i(Y_1 \times Y_2) \otimes \mathbb Q$ is the same as
$$(\pi_i(f_1) \otimes \mathbb Q) \oplus (\pi_i(f_2) \otimes \mathbb Q): (\pi_i(X_1) \otimes \mathbb Q) \oplus  (\pi_i(X_2) \otimes \mathbb Q) \to (\pi_i(Y_1) \otimes \mathbb Q) \oplus  (\pi_i(Y_2) \otimes \mathbb Q). $$
Hence 
$$\op{Ker}  \bigl(\pi_i(f_1 \times f_2) \otimes \mathbb Q \bigr ) = \op{Ker} \bigl(\pi_i(f_1) \otimes \mathbb Q \bigr) \oplus \op{Ker}  \bigl(\pi_i(f_2) \otimes \mathbb Q \bigr),$$
which implies 
\begin{equation}\label{eq-pi}
\op{dim} \left (\op{Ker}  \bigl(\pi_i(f_1 \times f_2) \otimes \mathbb Q \bigr ) \right )= \op{dim} \left (\op{Ker}  \bigl(\pi_i(f_1) \otimes \mathbb Q \bigr)  \right) + \op{dim} \left (\op{Ker}  \bigl(\pi_i(f_2) \otimes \mathbb Q \bigr) \right ).
\end{equation}
Thus $\op{dim} \left (\op{Ker}  \bigl(\pi_*(f_1) \otimes \mathbb Q \bigr)  \right) <\infty $ and $\op{dim} \left (\op{Ker}  \bigl(\pi_*(f_2) \otimes \mathbb Q \bigr) \right ) <\infty$ imply that 
$$\op{dim} \left (\op{Ker}  \bigl(\pi_*(f_1 \times f_2) \otimes \mathbb Q \bigr ) \right ) < \infty.$$
Since the homology rank of $X_i \, (i=1,2)$ is finite, i.e., $\op{dim} H_*(X_i;\mathbb Q) < \infty \, (i=1,2)$, we have that $\op{dim} \left (\op{Ker} H_*(f_1 \times f_2;\mathbb Q \bigr ) \right )<\infty$, because $H_*(X_1 \times X_2; \mathbb Q) \cong H_*(X_1; \mathbb Q) \otimes H_*(X_2; \mathbb Q)$, thus $\op{dim} H_*(X_1 \times X_2;\mathbb Q)<\infty$.
Therefore the product $f_1 \times f_2: X_1 \times  X_2 \to Y_1 \times Y_2$ is also a rationally elliptic map with respect to kernel.

(1) From (\ref{eq-pi}) above we get
\begin{align*}
P^{\pi}_{f_1 \times f_2}(t) & = \sum_{i\geqq 2} t^i \op{dim} \left (\op{Ker}  \bigl(\pi_i(f_1 \times f_2) \otimes \mathbb Q \bigr ) \right )\\
& = \sum_{i\geqq 2} t^i \op{dim} \left (\op{Ker}  \bigl(\pi_i(f_1) \otimes \mathbb Q \bigr) \right) + \sum_{i\geqq 2} t^i  \op{dim} \left (\op{Ker}  \bigl(\pi_i(f_2) \otimes \mathbb Q \bigr) \right) \\
& = P^{\pi}_{f_1}(t) + P^{\pi}_{f_2}(t).
\end{align*}

(2) $H_i(f_1 \times f_2;\mathbb Q): H_i(X_1 \times X_2; \mathbb Q) \to H_i(Y_1 \times Y_2; \mathbb Q)$ can be expressed as follows by K\"unneth theorem:
$$H_i(f_1 \times f_2;\mathbb Q): \sum_{i=j+k} H_j(X_1;\mathbb Q) \otimes  H_k(X_2;\mathbb Q) \to \sum_{i=j+k}H_j(Y_1;\mathbb Q) \otimes  H_k(Y_2;\mathbb Q)$$
Since $X_i$ and $Y_i$ ($i=1, 2$) are simply connected, the products $X_1 \times X_2$ and $Y_1 \times Y_2$ are also simply connected. Hence $ \op{Ker} H_0(f_1 \times f_2; \mathbb Q) = \op{Ker} H_1(f_1 \times f_2; \mathbb Q)  =0.$ 

For $i \geqq 2$, we have the following inequality (*)
\begin{align*} \label{sum}
\Bigl ( \op{Ker} H_i(f_1;\mathbb Q)  & \otimes H_0(X_2;\mathbb Q) \Bigr ) \oplus  \Bigl (H_0(X_1;\mathbb Q) \otimes \op{Ker} H_i(f_2;\mathbb Q) \Bigr) \oplus  \\
& \hspace{2cm}   \sum_{i=j+k, j\geqq 2, k \geqq 2} \op{Ker} H_j(f_1;\mathbb Q)  \otimes \op{Ker} H_k(f_2;\mathbb Q) \\
& \hspace{2cm} \subset \op{Ker} H_i(f_1 \times f_2; \mathbb Q).
\end{align*}
Clearly 
$$\sum_{i=j+k, j\geqq 2, k \geqq 2} \op{Ker} H_j(f_1;\mathbb Q) \otimes H_k(X_2; \mathbb Q) \, \,  +  \sum_{i=j+k, j\geqq 2, k \geqq 2} H_j(X_1; \mathbb Q) \otimes \op{Ker} H_k(f_2;\mathbb Q) $$
is also contained in $\op{Ker} H_i(f_1 \times f_2; \mathbb Q)$, and furthermore
probably one could obtain a complete description of $\op{Ker} H_i(f_1 \times f_2; \mathbb Q)$, but for our purpose we do not need to do so and the above inequality (*) is sufficient. The dimension of the above is equal to the following: for $i \geqq 2$
\begin{align*}
\op{dim} \left ( \op{Ker} H_i(f_1;\mathbb Q) \right ) &  + \op{dim} \left (\op{Ker} H_i(f_2;\mathbb Q)  \right ) \\
& + \sum_{i=j+k, j\geqq 2, k \geqq 2}  \op{dim} \left (\op{Ker} H_j(f_1;\mathbb Q) \right ) \times \op{dim}\left (\op{Ker} H_k(f_2;\mathbb Q)  \right )\\
& \leqq \op{dim} \left (\op{Ker} H_i(f_1 \times f_2; \mathbb Q) \right ). 
\end{align*}
Therefore we have that for each $i \geqq 2$ and $t \geqq 0$:
\begin{align*}
t^i\op{dim} \left ( \op{Ker} H_i(f_1;\mathbb Q) \right ) &+ t^i\op{dim} \left ( \op{Ker} H_i(f_2;\mathbb Q) \right ) \\
& + \sum_{i=j+k, j\geqq 2, k \geqq 2}  t^j\op{dim} \left (\op{Ker} H_j(f_1;\mathbb Q)  \right ) \times t^k \op{dim} \left (\op{Ker} H_k(f_2;\mathbb Q)  \right )\\
& \leqq t^i\op{dim} \left ( \op{Ker} H_i(f_1 \times f_2; \mathbb Q) \right ). 
\end{align*}
Therefore we have 
\begin{align*}
P_{f_1 \times f_2}(t) &= 1 + \sum_{i\geqq 2} t^i \op{dim} \left (\op{Ker} H_i(f_1 \times f_2; \mathbb Q) \right )\\
& \geqq  1 + \sum_{i\geqq 2} t^i \op{dim} \left (\op{Ker} H_i(f_1; \mathbb Q) \right )+ \sum_{i\geqq 2} t^i \op{dim} \left ( \op{Ker} H_i(f_2; \mathbb Q) \right )\\
& \hspace{1cm} 
+ \sum_{i\geqq 4} \Bigl ( \sum_{i=j+k, j\geqq 2, k\geqq 2} t^j\op{dim} \left (\op{Ker} H_j(f_1; \mathbb Q) \right )\times t^k\op{dim} \left (\op{Ker} H_k(f_2; \mathbb Q)\right ) \Bigr )  \\
& = \Bigl  (1 + \sum_{j\geqq 2} t^j \op{dim} \left (\op{Ker} H_j(f_1; \mathbb Q) \right ) \Bigr ) \times \Bigl (1 + \sum_{k\geqq 2} t^k \op{dim} \left (\op{Ker} H_k(f_2; \mathbb Q) \right )\Bigr)\\
& = P_{f_1}(t)  \times P_{f_2}(t). 
\end{align*}
Hence we have $ P_{f_1}(t)  \times P_{f_2}(t) \leqq P_{f_1 \times f_2}(t)$ for $\forall t \geqq 0$.
\end{proof}
\begin{rem} The equality $ P_{f_1}(t)  \times P_{f_2}(t) = P_{f_1 \times f_2}(t)$ does not hold in general. However, in order to prove Theorem \ref{main} the above inequality (2) of Proposition \ref{prop-multi} is sufficient . 

\end{rem}
\begin{cor}\label{cor} Let $f:X \to Y$ be a continuous rationally elliptic map with respect to kernel of simply connected spaces $X$ and $Y$ such that the homology rank of $X$ is finite. 
Then we have 
\begin{enumerate}
\item $P^{\pi}_{f^n}(t) = n \bigl (P^{\pi}_{f}(t) \bigr)$ for $\forall t$
\item $ \bigl (P_{f}(t) \bigr)^n \leqq P_{f^n}(t)$ for $\forall t \geqq 0$.
\end{enumerate}
\end{cor}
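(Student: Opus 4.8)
The plan is to derive the Corollary directly from Proposition \ref{prop-multi} by induction on $n$, taking $f_1 = f^{n-1}$ and $f_2 = f$ at each stage. The base case $n=1$ is trivial, and $n=2$ is exactly Proposition \ref{prop-multi} applied to $f_1 = f_2 = f$. So the substance of the argument is merely to iterate the product formulas already established.

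First I would verify that the inductive hypotheses persist, so that Proposition \ref{prop-multi} may legitimately be applied to the pair $f^{n-1}$ and $f$ at each step. This requires $f^{n-1}:X^{n-1} \to Y^{n-1}$ to be rationally elliptic with respect to kernel and $X^{n-1}$ to have finite homology rank. The finiteness of the homology rank of $X$ propagates to $X^{n-1}$ by the K\"unneth isomorphism $H_*(X^{n-1};\mathbb Q) \cong H_*(X;\mathbb Q)^{\otimes (n-1)}$, and the kernel-ellipticity of $f^{n-1}$ is exactly the conclusion recorded in the opening paragraph of the proof of Proposition \ref{prop-multi} (the product of two kernel-elliptic maps with finite-homology-rank sources is again kernel-elliptic). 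Hence Proposition \ref{prop-multi} applies at every stage of the induction.

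For (1), the additivity statement gives
\[
P^{\pi}_{f^n}(t) = P^{\pi}_{f^{n-1} \times f}(t) = P^{\pi}_{f^{n-1}}(t) + P^{\pi}_f(t),
\]
and substituting the inductive hypothesis $P^{\pi}_{f^{n-1}}(t) = (n-1)\,P^{\pi}_f(t)$ immediately yields $P^{\pi}_{f^n}(t) = n\,P^{\pi}_f(t)$. Since part (1) of the Proposition is valid for all $t$, so is this identity.

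For (2), I would use that for $t \geq 0$ every kernel Poincar\'e polynomial has nonnegative coefficients, so that $P_f(t) \geq 1 > 0$; this allows the inductive inequality $(P_f(t))^{n-1} \leq P_{f^{n-1}}(t)$ to be multiplied through by $P_f(t) \geq 0$ without reversing its direction, giving
\[
(P_f(t))^n = (P_f(t))^{n-1}\, P_f(t) \leq P_{f^{n-1}}(t)\, P_f(t) \leq P_{f^{n-1} \times f}(t) = P_{f^n}(t),
\]
where the final inequality is Proposition \ref{prop-multi}(2) with $f_1 = f^{n-1}$ and $f_2 = f$. The one point demanding care — and the closest thing to an obstacle — is precisely this preservation of the inequality under multiplication, which is why the restriction $t \geq 0$ cannot be dropped in (2); beyond that, the proof is a routine induction with no further subtleties.
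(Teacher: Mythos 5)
Your proof is correct and matches the paper's intent exactly: the paper gives no separate proof of Corollary \ref{cor}, treating it as an immediate consequence of Proposition \ref{prop-multi} by iteration, which is precisely the induction you carry out. Your verification that kernel-ellipticity and finite homology rank propagate to $f^{n-1}$ and $X^{n-1}$ is a careful (and correct) filling-in of the detail the paper leaves implicit.
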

\begin{rem} Note that in (2) of Corollary \ref{cor} we do need $\forall t \geqq 0$. 
\end{rem}
\begin{cor}\label{cor-1} Let the setup be as in Proposition \ref{prop-multi}. Suppose that $P^{\pi}_{f_i}(1) \leqq P_{f_i}(1) \, (i=1,2)$. Then  $P^{\pi}_{f_1 \times f_2}(1) \leqq P_{f_1 \times f_2}(1)$ in the following cases:
\begin{enumerate}
\item $P_{f_i}(1) \geqq 2$ for $i=1,2$,
\item $P^{\pi}_{f_1}(1) = 0$ or $P^{\pi}_{f_2}(1) = 0$.
\end{enumerate}
In particular, if the relative Hilali conjecture holds for $f_1$ and $f_2$, then it also holds for the product $f_1 \times f_2$ in the above two cases.
\end{cor}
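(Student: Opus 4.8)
The plan is to evaluate both formulas of Proposition~\ref{prop-multi} at $t=1$ and thereby reduce the claim to a purely elementary inequality among non-negative integers. Writing $a_i := P^{\pi}_{f_i}(1)$ and $b_i := P_{f_i}(1)$ for $i=1,2$, part~(1) of Proposition~\ref{prop-multi} gives the exact equality $P^{\pi}_{f_1 \times f_2}(1) = a_1 + a_2$, while part~(2), valid since $1 \geqq 0$, gives $b_1 b_2 \leqq P_{f_1 \times f_2}(1)$. By definition $\op{Ker}P^{\pi}_{f_i}(t)$ has non-negative integer coefficients and $P_{f_i}(t) = 1 + \op{Ker}P_{f_i}(t)$ has constant term $1$, so $a_i \geqq 0$ and $b_i \geqq 1$; the standing hypothesis of the corollary is $a_i \leqq b_i$. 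Because $P_{f_1 \times f_2}(1) \geqq b_1 b_2$, it suffices in each case to prove the inequality $a_1 + a_2 \leqq b_1 b_2$.

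In case~(1) I would combine $a_1 + a_2 \leqq b_1 + b_2$ (from $a_i \leqq b_i$) with the observation that $b_1 + b_2 \leqq b_1 b_2$ is equivalent, via the identity $b_1 b_2 - b_1 - b_2 = (b_1-1)(b_2-1) - 1$, to $(b_1-1)(b_2-1) \geqq 1$; the latter holds because $b_i \geqq 2$ forces $b_i - 1 \geqq 1$. Chaining these yields $a_1 + a_2 \leqq b_1 + b_2 \leqq b_1 b_2 \leqq P_{f_1 \times f_2}(1)$. In case~(2), by the symmetry of the two factors I may assume $a_2 = P^{\pi}_{f_2}(1) = 0$; then $a_1 + a_2 = a_1 \leqq b_1$, and since $b_2 \geqq 1$ we have $b_1 \leqq b_1 b_2$, whence again $a_1 + a_2 \leqq b_1 b_2 \leqq P_{f_1 \times f_2}(1)$.

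There is no genuine obstacle here; the argument is elementary once Proposition~\ref{prop-multi} is in hand. The only two points deserving attention are that $b_i \geqq 1$ \emph{always} holds, which is needed in case~(2) and is guaranteed precisely by the summand ``$1+$'' in the definition of $P_{f_i}(t)$, and the factorization $b_1 b_2 - b_1 - b_2 = (b_1-1)(b_2-1) - 1$, which converts the superadditivity needed in case~(1) into a trivial positivity statement. Finally, since the relative Hilali conjecture for a map $g$ is by definition the assertion $P^{\pi}_g(1) \leqq P_g(1)$, the hypothesis $a_i \leqq b_i$ says exactly that it holds for $f_1$ and $f_2$, and the conclusion $P^{\pi}_{f_1 \times f_2}(1) \leqq P_{f_1 \times f_2}(1)$ is exactly its validity for the product $f_1 \times f_2$; this proves the final ``in particular'' assertion in both cases.
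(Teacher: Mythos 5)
Your proof is correct and follows essentially the same route as the paper: evaluate Proposition \ref{prop-multi} at $t=1$, chain $P^{\pi}_{f_1\times f_2}(1)=P^{\pi}_{f_1}(1)+P^{\pi}_{f_2}(1)\leqq P_{f_1}(1)+P_{f_2}(1)\leqq P_{f_1}(1)P_{f_2}(1)\leqq P_{f_1\times f_2}(1)$ in case (1) via the same factorization $ab-a-b=(a-1)(b-1)-1$, and use $P_{f_i}(1)\geqq 1$ in case (2). The only differences are cosmetic (your $a_i,b_i$ notation and which factor you assume has vanishing homotopical polynomial).
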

\begin{proof} 
First we note that $P_{f_i}(1) \geqq 1 \, (i=1,2)$ by the definition.
\begin{enumerate}
\item 
\begin{align*}
P^{\pi}_{f_1 \times f_2}(1) & = P^{\pi}_{f_1}(1) + P^{\pi}_{f_2}(1) \\
& \leqq P_{f_1}(1) + P_{f_2}(1) \\
& \leqq  P_{f_1}(1) \times P_{f_2}(1) \quad \text{(since $P_{f_i}(1) \geqq 2$ \, $(i=1,2)$ and (*) below)} \\
& \leqq P_{f_1 \times f_2}(1)
\end{align*}
(*) If $a, b \geqq 2$, then $ab -a -b = (a-1)(b-1)-1 \geqq 0$ because $a-1\geqq 1, b-1\geqq 1$.
\item For example, we let $P^{\pi}_{f_1}(1) = 0$. Then we have
\begin{align*}
P^{\pi}_{f_1 \times f_2}(1) & = P^{\pi}_{f_1}(1) + P^{\pi}_{f_2}(1) \\
& = P_{f_2}(1) \\
& \leqq  P_{f_1}(1) \times P_{f_2}(1) \quad \text{(since $P_{f_1}(1) \geqq 1$)} \\
& \leqq P_{f_1 \times f_2}(1)
\end{align*}
\end{enumerate}
\end{proof}
\begin{rem} 
The other cases which are not treated in Corollary \ref{cor-1} are the cases when at least one $P_{f_i}(1)=1$
and $P^{\pi}_{f_i}(1) \not =0 \, (i=1,2)$. For example, let $P_{f_1}(1)=1$. Then since 
$0 \not = P^{\pi}_{f_1}(1) \leqq P_{f_1}(1) =1$, we have $P_{f_1}(1)= P^{\pi}_{f_1}(1)=1.$
In this case at the moment we do not know whether $P^{\pi}_{f_1 \times f_2}(1) \leqq P_{f_1 \times f_2}(1)$ or not.
$P_{f}(1) =1$ means that $\op{Ker} H_*(f;\mathbb Q)=0$, i.e., $H_*(f;\mathbb Q): H_*(X;\mathbb Q) \to H_*(Y;\mathbb Q)$ is injective and $P^{\pi}_{f}(1) =1$ means that $\pi_*(f) \otimes \mathbb Q): \pi_*(X) \otimes \mathbb Q \to \pi_*(Y) \otimes \mathbb Q$ is \emph{not} injective, thus for this map $f_1$ the homological injectivity does not imply the homotopical injectivity.
If we could show that the homological injectivity implies the homotopical injectivity, i.e., $P_{f_1}(1) =1$ implies $P^{\pi}_{f_1}(1) =0$, which becomes the above second case (2). We will discuss this injectivity problem later.
\end{rem}
Now we give a proof of Theorem \ref{main}.
\begin{proof} 
If $P_{f}(1) >1$, i.e., there exists some integer $i \geqq 2$ such that the homomorphism $H_i(f;\mathbb Q): H_i(X; \mathbb Q) \to H_i(Y; \mathbb Q)$ is not injective, then, whatever the value $P^{\pi}_{f}(1)$ is, 
there exists some integer $n_0$ such that for $\forall n \geqq n_0$
$$n(P^{\pi}_{f}(1)) < (P_{f}(1))^n.$$
Indeed, since $P_{f}(1) >1$, we have that $\frac{1}{P_{f}(1)}<1$. It follows from an elementary fact in calculus ``$|r| <1 \Rightarrow \displaystyle \lim_{n \to \infty} nr^n=0$" that we have
$$\lim_{n \to \infty} n \Bigl (\frac{1}{P_f(1)} \Bigr)^n = 0.$$
Therefore, whatever the value $P^{\pi}_f(1)$ is, we obtain
$$\lim_{n \to \infty} n P^{\pi}_f(1)\Bigl (\frac{1}{P_f(1)} \Bigr)^n = \lim_{n \to \infty} \frac{nP^{\pi}_f(1)}{(P_f(1))^n} = 0.$$
Hence there exists some integer $n_0$ such that for $\forall n \geqq n_0$
$$\frac{nP^{\pi}_f(1)}{(P_f(1))^n} < 1,$$
which implies, using (2) of Corollary \ref{cor}, that
$$P^{\pi}_{f^n}(1) = n(P^{\pi}_{f}(1)) < (P_{f}(1))^n \leqq  P_{f^n}(1).$$
Therefore we can conclude that there exists some integer $n_0$ such that for all $n \geqq n_0$
$$P^{\pi}_{f^n}(1) <  P_{f^n}(1).$$
\end{proof}
As one can see, in the above proof, the requirement $P_f(1) > 1$ or the non-injectivity of $H_i(f;\mathbb Q):H_i(X;\mathbb Q) \to H_i(Y;\mathbb Q)$ for some $i$ is crucial. If we could show that the injectivity of each \emph{homological} homomorphisms $H_i(f;\mathbb Q):H_i(X;\mathbb Q) \to H_i(Y;\mathbb Q)$ would imply the injectivity of the \emph{homotopical} homomorphism $\pi_i(f) \otimes \mathbb Q :\pi_i(X)\otimes \mathbb Q \to \pi_i(Y) \otimes \mathbb Q$, then $0=P^{\pi}_f(1) < P_f(1)= 1,$
thus the above inequality would hold for $n_0=1$ and in fact, as we can see that for $\forall n \geqq n_0=1$ the inequality holds. But, as seen in the counterexample Example \ref{c-ex}, in the set-up of Theorem \ref{main}, the injectivity of each $H_i(f;\mathbb Q)$ \emph{does not} necessarily imply the injectivity of each $\pi_i(f) \otimes \mathbb Q$. In fact, the map $f: S^4 \times S^6 \to K(\mathbb Q, 4) \times K(\mathbb Q, 6)$ of Example (\ref{c-ex}) is \emph{not a continuous rationally elliptic map with respect to cokernel}. Furthermore we do have another counterexample: 
\begin{ex}
Consider the following canonical inclusion map
$$g: S^3 \vee S^3  \hookrightarrow S^3 \times S^3.$$
Then $\op{Ker} H_*(g;\mathbb Q)=0$, but $\dim \left (\op{Ker}(\pi_*(g)\otimes \mathbb Q) \right)=\infty$, thus the homological injectivity does not imply the homotopical injectivity. In this case we emphasize that $g$ is 
\emph{not a continuous rationally elliptic map with respect to kernel}.
\end{ex}

If $H_i(Y;\mathbb Q) = 0$ for $\forall i>0$, e.g., if $Y$ is contractible, then the injectivity of each homological homomorphisms $H_i(f;\mathbb Q):H_i(X;\mathbb Q) \to H_i(Y;\mathbb Q)$ means that $H_i(X;\mathbb Q)=0$. Furthermore, $\dim H_{*}(X; \Q) =1$ (for a pathconnected space $X$) is equivalent to $H_*(a_X; \mathbb Q) :H_*(X;\Q) \to 
H_*(pt)=\Q$ being an isomorphism, where $a_X:X \to pt$ is the map to a point. Thus it follows from the Whitehead--Serre Theorem \cite[Theorem 8.6]{FHT} that $(a_X)_*\otimes \Q :\pi_*(X)\otimes \Q \to \pi_*(pt)\otimes \Q =0$ is an isomorphism, hence $\pi_*(X)\otimes \Q =0$. Thus we get the injectivity of the homotopical homomorphism $\pi_i(f) \otimes \mathbb Q:\pi_i(X)\otimes \mathbb Q \to \pi_i(Y) \otimes \mathbb Q$. 

So, we would like to make the following conjecture, which we have been unable to resolve:
\begin{con}[``Injectivity conjecture"]\label{injective}
 Let $f:X \to Y$ be a continuous rationally elliptic map of simply connected 
  spaces $X$ and $Y$. 
 The injectivity of each homological homomorphism
 $H_i(f;\mathbb Q):H_i(X;\mathbb Q) \to H_i(Y;\mathbb Q)$ for $\forall i >1$ implies the injectivity of each homotopical homomorphism $\pi_i(f) \otimes \mathbb Q:\pi_i(X)\otimes \mathbb Q \to \pi_i(Y) \otimes \mathbb Q$ for $\forall i >1$. 
\end{con}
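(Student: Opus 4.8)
The plan is to pass entirely to Sullivan minimal models and prove the dual algebraic statement. Write $(\Lambda V_X,d)$ and $(\Lambda V_Y,d)$ for the minimal models of $X$ and $Y$, so that $f$ is modelled by a morphism $\varphi:(\Lambda V_Y,d)\to(\Lambda V_X,d)$ with $V_X\cong(\pi_*(X)\otimes\Q)^*$, $V_Y\cong(\pi_*(Y)\otimes\Q)^*$ and $H^*(\Lambda V_X)\cong H^*(X;\Q)$. Over $\Q$ the map $H_i(f;\Q)$ is injective iff its dual $H^i(\varphi)$ is surjective, and $\pi_i(f)\otimes\Q$ is injective iff the linear part $Q(\varphi):V_Y\to V_X$ is surjective in degree $i$. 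Dualizing Definition \ref{k-cok-el}, the cokernel-ellipticity hypothesis becomes $\op{dim}\op{Ker}(H^*(\varphi))<\infty$, while kernel-ellipticity gives $\op{dim}\op{Coker}(Q(\varphi))<\infty$ and $\op{dim}\op{Ker}(Q(\varphi))<\infty$. Thus the conjecture is equivalent to the purely algebraic assertion: \emph{if $H^*(\varphi)$ is surjective with finite-dimensional kernel and $Q(\varphi)$ has finite-dimensional kernel and cokernel, then $\op{Coker}(Q(\varphi))=0$.} Equivalently, passing to the rational homotopy fibre $F$ and its relative minimal model $(\Lambda V_Y\otimes\Lambda W,D)$ with $W\cong(\pi_*(F)\otimes\Q)^*$, I would show that the transgression $\tau:W\to V_Y$ (the $V_Y$-linear part of $D|_W$) is injective, i.e. the connecting map $\partial:\pi_{*+1}(Y)\otimes\Q\to\pi_*(F)\otimes\Q$ is surjective.

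Two tools feed the argument. First, naturality of the dual Hurewicz homomorphism $h^*_X:H^*(X;\Q)\to V_X$ (taking a cocycle to its linear part) gives a commutative square $h^*_X\circ H^*(\varphi)=Q(\varphi)\circ h^*_Y$; since $H^*(\varphi)$ is surjective this yields $\op{Im}(h^*_X)\subseteq\op{Im}(Q(\varphi))$, so every \emph{spherical} generator of $V_X$ (one detected in rational homology) already lies in $\op{Im}(Q(\varphi))$. Hence $\op{Coker}(Q(\varphi))$ is built entirely from \emph{non-spherical} generators, i.e. generators $u\in V_X$ on which the quadratic part $d_1:V_X\to\Lambda^2V_X$ is essential (the $\pi_3(S^2)$-type, Whitehead-product classes). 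Second, surjectivity of $H^*(\varphi)$ forces the Serre spectral sequence of $F\to X\to Y$ to satisfy $E_\infty^{p,q}=0$ for all $q>0$, so that $H^{>0}(F;\Q)$ is entirely killed by differentials supported on the base. The plan is then an extremal argument: assuming $\op{Coker}(Q(\varphi))\neq0$, choose a surviving generator $u\in V_X^n$ of \emph{lowest} degree $n$. By minimality of $n$, $Q(\varphi)$ is onto $V_X^{<n}$, so the cocycle $du\in\Lambda^{\geq2}V_X^{<n}$ can be lifted to a decomposable cocycle $\widetilde\omega\in\Lambda^{\geq2}V_Y$ with $H^*(\varphi)[\widetilde\omega]=[du]=0$, placing $[\widetilde\omega]$ in $\op{Ker}(H^*(\varphi))$.

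The main obstacle is exactly this last step, and it is the reason the conjecture remains open. A surviving non-spherical generator $u$ records a \emph{relation} imposed in $H^*(X;\Q)$ but absent in $H^*(Y;\Q)$ (in the prototype $f:S^2\to K(\Q,2)$ the relation $a^2=0$ creates the class dual to $\pi_3(S^2)$). When the offending class $[\widetilde\omega]\in H^*(Y;\Q)$ has \emph{infinite order}, as for $K(\Q,2)$, its powers $[\widetilde\omega],[\widetilde\omega]^2,\dots$ all lie in $\op{Ker}(H^*(\varphi))$, contradicting the finite-dimensionality of that kernel; this is precisely how the cokernel-ellipticity hypothesis rules out Example \ref{c-ex}. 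The difficulty is to run this propagation uniformly. When $H^*(Y;\Q)$ is finite-dimensional (the case $X,Y$ both rationally elliptic) the class $[\widetilde\omega]$ is nilpotent and no infinite family appears, so one must instead exploit the Poincar\'e duality structure of the cohomology of an elliptic space together with a finer analysis of the higher (non-quadratic) terms of $d$; and when $X,Y$ are both rationally hyperbolic the available structure theory seems too weak to control how a single surviving generator perturbs the infinite-dimensional cohomology. Bridging the transparent ``infinite-order'' mechanism to these remaining regimes is the crux, and I do not currently see how to close it in general.
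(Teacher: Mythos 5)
The statement you were asked to prove is Conjecture \ref{injective}, which the paper itself introduces with the words ``we would like to make the following conjecture, which we have been unable to resolve''; there is no proof in the paper to compare yours against. Your proposal is likewise not a proof, and to your credit you say so explicitly: your final paragraph concedes that the key lifting-and-propagation step cannot be closed. So the verdict is simply that the statement remains open and your attempt does not change that.

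That said, your reductions are correct and worth assessing on their own. The translation into Sullivan models is accurate: injectivity of $H_i(f;\mathbb Q)$ is dual to surjectivity of $H^i(\varphi)$, injectivity of $\pi_i(f)\otimes\mathbb Q$ is dual to surjectivity of the linear part $Q(\varphi)$, and rational ellipticity of $f$ dualizes to finite-dimensionality of $\op{Ker}H^*(\varphi)$, $\op{Ker}Q(\varphi)$ and $\op{Coker}Q(\varphi)$. The naturality square for the linear-part (dual Hurewicz) map does show that $\op{Im}Q(\varphi)$ contains every generator of $V_X$ detected in cohomology, and your observation that finite-dimensionality of $\op{Ker}H^*(\varphi)$ is exactly what excludes Example \ref{c-ex} (all powers of the offending class lie in that kernel) is the right diagnosis of why the ellipticity hypothesis on the map was added to the conjecture. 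The genuine gap is the one you name, and it is real at two levels. First, the lift of $du$ is not automatic: surjectivity of $Q(\varphi)$ onto $V_X^{<n}$ only provides degree-wise preimages of generators, and since $\varphi(w)=Q(\varphi)(w)+(\text{decomposables})$ the resulting word $\widetilde\omega$ need not be a cocycle, nor need $[\widetilde\omega]$ be nonzero. Second, even granting a nonzero class in $\op{Ker}H^*(\varphi)$, a single such class contradicts nothing; you need an infinite family, and when $H^*(Y;\mathbb Q)$ is finite-dimensional (the original elliptic case of the conjecture) every class is nilpotent, so the ``take powers'' mechanism that works for $K(\mathbb Q,2n)$ produces nothing. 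Until that regime is handled the argument cannot close, which is consistent with the conjecture's open status.
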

\begin{rem} In the original paper we made such a conjecture for \emph{a continuous map $f:X \to Y$ of simply connected elliptic spaces $X$ and $Y$}, which is surely a rationally elliptic map. Thus the above ``injectivity conjecture" is an extended version of the original conjecture.
\end{rem}

As a corollary of the above proof of Theorem \ref{main}, we can show that if $P_f(1) >1$, then for any $s >0$ there exists a positive integer $n(s)$ such that for $\forall n \geqq n(s)$
\begin{equation}\label{general one}
P^{\pi}_{f^n}(s) < P_{f^n}(s)
\end{equation}
because $P_f(1) >1$ implies $P_f(s)=1 + \sum_{i\geqq 2}\op{dim} \left (\op{Ker} H_i(f^n; \mathbb Q) \right) s^i >1$.
By the definition of $P_f(t)$ and $P^{\pi}_f(t)$ we have that $P_f(0)=1$ and $P^{\pi}_f(0)=0$. Hence for any integer $n \geqq 1$ we have that $0 = n(P^{\pi}_f(0))=P^{\pi}_{f^n}(0) < 1^n=P_f(0)^n = P_{f^n}(0)=1$ (whether $P_f(1) >1$ or not).
Therefore we get the following
\begin{cor}
If $P_f(1) >1$, then for any $s \geqq 0$ there exists a positive integer $n(s)$ such that for $\forall n \geqq n(s)$
$$P^{\pi}_{f^n}(s) < P_{f^n}(s).$$ 
\end{cor}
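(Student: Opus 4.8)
The plan is to reduce the statement to the limit argument already appearing in the proof of Theorem \ref{main}, splitting into the two regimes $s>0$ and $s=0$, since the multiplicative lower bound of Corollary \ref{cor} degenerates at $s=0$. The essential preliminary observation is that although the hypothesis $P_f(1)>1$ only asserts positivity at $t=1$, it in fact forces $P_f(s)>1$ for every $s>0$: it guarantees some $i\geqq 2$ with $\op{dim}(\op{Ker}H_i(f;\Q))\geqq 1$, and since every coefficient of $\op{Ker}P_f(t)$ is nonnegative,
$$P_f(s)=1+\sum_{i\geqq 2}\op{dim}(\op{Ker}H_i(f;\Q))\,s^i\geqq 1+s^i>1.$$

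For $s>0$ I would then rerun the proof of Theorem \ref{main} verbatim with $t=s$ in place of $t=1$. Since $0<1/P_f(s)<1$, the elementary fact that $|r|<1$ implies $\lim_{n\to\infty}nr^n=0$ gives $\lim_{n\to\infty}nP^{\pi}_f(s)\bigl(1/P_f(s)\bigr)^n=0$, so there exists an integer $n(s)$ such that $nP^{\pi}_f(s)<(P_f(s))^n$ for all $n\geqq n(s)$. Invoking the additivity $P^{\pi}_{f^n}(s)=n\,P^{\pi}_f(s)$ together with the inequality $(P_f(s))^n\leqq P_{f^n}(s)$ of Corollary \ref{cor} (whose validity requires exactly $s\geqq 0$) yields
$$P^{\pi}_{f^n}(s)=nP^{\pi}_f(s)<(P_f(s))^n\leqq P_{f^n}(s),\qquad n\geqq n(s).$$

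Finally I would treat $s=0$ by hand, because there $P_f(0)=1$ and the ratio $1/P_f(0)=1$ is not $<1$, so the limit argument collapses. But directly from the definitions $P^{\pi}_f(0)=0$ and $P_f(0)=1$, whence $P^{\pi}_{f^n}(0)=n\cdot 0=0$ while $P_{f^n}(0)=1$, so the strict inequality $0<1$ holds for every $n\geqq 1$ and one may take $n(0)=1$. I expect no genuine obstacle: the corollary is essentially a repackaging of the proof of Theorem \ref{main}, and the only points requiring care are the propagation of positivity from $t=1$ to general $s>0$ that secures $P_f(s)>1$, and the separate, trivial treatment of the degenerate endpoint $s=0$.
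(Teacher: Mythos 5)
Your proof is correct and follows essentially the same route as the paper: the observation that $P_f(1)>1$ forces $P_f(s)>1$ for all $s>0$ (so the limit argument from the proof of Theorem \ref{main} applies verbatim with $s$ in place of $1$, via Corollary \ref{cor}), together with the separate direct check $P^{\pi}_{f^n}(0)=0<1=P_{f^n}(0)$ at the endpoint $s=0$. No gaps.
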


\section{A remark on the case of rationally hyperbolic maps}
Before finishing we give a remark about the case when $f:X \to Y$ is a rationally hyperbolic map with respect to kernel.

Since $f:X \to Y$ is rationally hyperbolic map with respect to kernel, as observed in Remark \ref{hyperbol-rem}, $X$ is rationally hyperbolic. Hence we have the homotopical Hilbert--Poincar\'e series and the homological Poincar\'e polynomial of $X$ and also those of $f:X \to Y$:
$$HP^{\pi}_X(t):= \sum_{i\geqq 2}^{\infty} \op{dim} \left (\pi_i(X) \otimes \mathbb Q \right ) t^i, \quad P_X(t)= 1 + \sum_{i\geqq 2} \op{dim} H_i(X; \mathbb Q) t^i$$
$$HP^{\pi}_f(t):= \sum_{i\geqq 2}^{\infty}  \op{dim} \left (\op{Ker} \bigl(\pi_i(f) \otimes \mathbb Q) \right ) t^i, \quad P_f(t)= 1+ \sum_{i\geqq 2} \op{dim} \left (\op{Ker} H_i(f; \mathbb Q) \right ) t^i $$
In \cite[Th\'eor\`eme 6.2.1]{Fe} Y. F\'elix showed that if $r_X$ denotes the radius of convergence of the above Hilbert--Poincar\'e series $HP^{\pi}_X(t)$ then $r_X <1$. For $t=0$ we have $HP^{\pi}_X(0)=0, P_X(0)=1$ and also $HP^{\pi}_f(0)=0, P_f(0)=1$, so we consider $r$ such that $0<r<r_f:=r(HP^{\pi}_f(t))$ the radius of convergence of the series $HP^{\pi}_f(t)$. Since we have
$$\op{dim} \left (\op{Ker} \left (\pi_i(f) \otimes \mathbb Q \right ) \right ) \leqq \op{dim} \bigl(\pi_i(X) \otimes \mathbb Q) $$
the convergence of $HP^{\pi}_X(r)$ implies the convergence of $HP^{\pi}_f(r)$, thus $r_X \leqq r_f<1$. Therefore, as a corollary of the proof of Theorem \ref{main}, we get the following corollary:
\begin{cor} Let $f:X \to Y$ be a rationally hyperbolic map with respect to kernel of simply connected spaces $X$ and $Y$. Let $P_f(1)>1$. Then for any $r$ such that $0<r<r_X$ there exists a positive integer $n(r)$ such that for $\forall n \geqq n(r)$
$$HP^{\pi}_{f^n}(r) <P_{f^n}(r).$$
\end{cor}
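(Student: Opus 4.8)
The plan is to mimic the proof of Theorem \ref{main} almost verbatim, replacing the homotopical Poincar\'e polynomial by the homotopical Hilbert--Poincar\'e series and checking that its two structural inputs survive when the homotopical kernel is infinite-dimensional: additivity of the homotopical series under products and the multiplicative lower bound on the homological side.

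First I would record the product formulas. On the homotopical side, the splitting $\pi_i(X_1 \times X_2) \otimes \mathbb Q = (\pi_i(X_1) \otimes \mathbb Q) \oplus (\pi_i(X_2) \otimes \mathbb Q)$ gives, exactly as in Proposition \ref{prop-multi}(1), that $\op{Ker}(\pi_i(f^n) \otimes \mathbb Q) \cong \bigoplus_{j=1}^{n} \op{Ker}(\pi_i(f) \otimes \mathbb Q)$, so the $i$-th coefficient of $HP^{\pi}_{f^n}$ is $n$ times that of $HP^{\pi}_f$. Multiplying all coefficients by the constant $n$ leaves the radius of convergence unchanged, so $r_{f^n} = r_f$, and on $0 < r < r_f$ one has the equality of convergent series $HP^{\pi}_{f^n}(r) = n \cdot HP^{\pi}_f(r)$. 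On the homological side, hyperbolicity with respect to kernel means $\op{dim}(\op{Ker} H_*(f;\mathbb Q)) < \infty$, so $P_f(t)$ is genuinely a polynomial and the K\"unneth argument of Proposition \ref{prop-multi}(2) applies without change, yielding $(P_f(t))^n \leqq P_{f^n}(t)$ for all $t \geqq 0$.

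Next I would assemble the convergence and positivity inputs. By F\'elix's theorem $r_X < 1$, and the coefficientwise bound $\op{dim}(\op{Ker}(\pi_i(f) \otimes \mathbb Q)) \leqq \op{dim}(\pi_i(X) \otimes \mathbb Q)$ forces $r_X \leqq r_f$; hence for $0 < r < r_X$ the value $HP^{\pi}_f(r)$ is a finite nonnegative constant. Since $P_f(1) > 1$ means some $\op{Ker} H_i(f;\mathbb Q)$ with $i \geqq 2$ is nonzero, for every $r > 0$ we have $P_f(r) = 1 + \sum_{i \geqq 2} \op{dim}(\op{Ker} H_i(f;\mathbb Q)) r^i > 1$.

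Finally I would run the same elementary limit argument as in Theorem \ref{main}. From $P_f(r) > 1$ we get $0 < 1/P_f(r) < 1$, so by the calculus fact that $n\rho^n \to 0$ when $|\rho| < 1$,
$$\lim_{n\to\infty} \frac{n \cdot HP^{\pi}_f(r)}{(P_f(r))^n} = HP^{\pi}_f(r) \cdot \lim_{n\to\infty} n\left(\frac{1}{P_f(r)}\right)^n = 0,$$
where we use that $HP^{\pi}_f(r)$ is a fixed finite constant. Thus there is $n(r)$ with $n \cdot HP^{\pi}_f(r) < (P_f(r))^n$ for all $n \geqq n(r)$, and combining the two product formulas gives
$$HP^{\pi}_{f^n}(r) = n \cdot HP^{\pi}_f(r) < (P_f(r))^n \leqq P_{f^n}(r).$$
The only role of hyperbolicity, as opposed to ellipticity, is that $HP^{\pi}_f$ is an infinite series rather than a polynomial, so the one thing to guard against is that $HP^{\pi}_f(r)$ might diverge; this is precisely why we restrict to $r < r_X \leqq r_f$. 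Past this convergence bookkeeping the argument is identical to that of Theorem \ref{main}, so I expect no genuine obstacle.
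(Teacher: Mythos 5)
Your proposal is correct and follows exactly the route the paper intends: the paper states this corollary as "a corollary of the proof of Theorem \ref{main}," relying on the coefficientwise additivity $HP^{\pi}_{f^n}(r)=n\,HP^{\pi}_f(r)$, the K\"unneth lower bound $(P_f(r))^n\leqq P_{f^n}(r)$, the convergence of $HP^{\pi}_f(r)$ for $r<r_X\leqq r_f$, and the limit $n\rho^n\to 0$. Your convergence bookkeeping (invariance of the radius under multiplying coefficients by $n$, and the observation that only the coefficient inequalities, not finiteness, are needed on the homological side) is exactly the right way to fill in what the paper leaves implicit.
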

\begin{rem} Let $\alpha = \sum_{n \geqq 0} a^n t^n$ and $\beta =\sum_{n \geqq 0} b_n t^n$ be power series such that $0 \leqq a_n \leqq b_n$. Let $r(\alpha)$ and $r(\beta)$ be the radius of convergence of the power series $\alpha$ and $\beta$. Then they are not necessarily the same, in general $r(\beta) \leqq r(\alpha)$. Hence in the above corollary instead of $r_X$ we could take the radius $r_f$.
\end{rem}
Finally, let us consider the case when $Y$ is a point, i.e, we consider a rationally hyperbolic space $X$. We pose the following question:
\begin{qu}(a ``Hilali conjecture" in the hyperbolic case) \label{quest} Let $X$ be a rationally hyperbolic space. Let $r_X:= r(HP^{\pi}_X(t))$ be the radius of convergence as above. Suppose that $HP^{\pi}_X(t)$ converges at $r_X$, i.e., $HP^{\pi}_X(r_X) < \infty$. Does the following inequality hold?
$$HP^{\pi}_X(r_X) \leqq P_X(r_X).$$
\end{qu}
\begin{rem} We point out that some power series $p(x)$ converge at $x=r$ where $r=r(p(x))$ is the radius of convergence, but some do not. Here are some examples:
\begin{enumerate}
\item $p_1(x) = \sum_{n=1}^{\infty} \frac{x^n}{n^2} = 1 + x + \frac{x^2}{2^2} + \frac{x^3}{3^2} + \cdots$, $r(p_1(x) ) =1$ and $p_1(1) =   \sum_{n=1}^{\infty} \frac{1}{n^2}= \frac{\pi^2}{6}$ (This is nothing but the Basel problem.)
\item A modified version of $p_1(x)$ is the following: Let $d>0$.

\noindent
$p_2(x) = \sum_{n=1}^{\infty} \frac{(dx)^n}{n^2} = 1 + dx + \frac{(dx)^2}{2^2} + \frac{(dx)^3}{3^2} + \cdots$, $r(p_2(x)) =\frac{1}{d}$ and $p_2(\frac{1}{d}) =  \sum_{n=1}^{\infty} \frac{1}{n^2}= \frac{\pi^2}{6}.$

\item $p_3(x) =  \sum_{n=0}^{\infty} x^n = 1 + x + x^2 + \cdots$, $r(p_3(x)) =1$, but $p_3(x)$ does not converge at $x=1$.
\item A modified version of $p_3(x)$ is the following: Let $d>0$.

\noindent
$p_4(x) =  \sum_{n=0}^{\infty} (dx)^n = 1 + dx + (dx)^2 + \cdots$, $r(p_4(x)) = \frac{1}{d}$, but $p_4(\frac{1}{d})$ does not converge at $x=\frac{1}{d}$.
\end{enumerate}
\end{rem}
\begin{rem}
Motivated by Question \ref{quest} for the hyperbolic space, it seems to be natural to consider the following other cases:
\begin{enumerate}
\item $\dim H_*(X;\mathbb Q) = \infty$ and $\dim (\pi_*(X) \otimes \mathbb Q) < \infty$:
In this case we have the homological Hilbert--Poincar\'e series $HP_X(t)$ and the homotopical Poincar\'e polynomial $P^{\pi}_X(t)$ and $P^{\pi}_X(1) < HP_X(1) =\infty$. A real problem would be the following. Let $r_X$ be the radius of convergence of the power series $HP_X(t)$. When $HP_X(r_X)$ does converge, does the following ``Hilali"-type inequality hold?
$$P^{\pi}_X(r_X) \leqq HP_X(r_X).$$ 
\item $\dim H_*(X;\mathbb Q) = \infty$ and $\dim (\pi_*(X) \otimes \mathbb Q) = \infty$:
In this case we have the homological Hilbert--Poincar\'e series $HP_X(t)$ and the homotopical Hilbert--Poincar\'e series  $HP^{\pi}_X(t)$ and $HP^{\pi}_X(1) = HP_X(1) =\infty$. Let $r^H_X$ be the radius of convergence of the power series $HP_X(t)$ and $r^{\pi}_X$ be the radius of convergence of the power series $HP^{\pi}_X(t)$. Let $r_X:= \op{min} \{r^H_X, r^{\pi}_X \}$. When both $HP^{\pi}_X(r_X)$ and $HP_X(r_X)$ do converge (note that if $r^{\pi}_X  < r^H_X$, say, then $HP_X(r^{\pi}_X)$ does converge by the definition of radius of convergence), does the following ``Hilali"-type inequality hold?
$$HP^{\pi}_X(r_X) \leqq HP_X(r_X).$$ \\
\end{enumerate}
\end{rem}
{\bf Acknowledgements:} We would like to thank the referee for his/her thorough reading and useful comments and suggestions. T.Y. is supported by JSPS KAKENHI Grant Number JP20K03591 and S.Y. is supported by JSPS KAKENHI Grant Number JP19K03468. \\

\end{document}